\newcommand{\bi}[1]{\mbox{\boldmath $#1$}}
\newcommand{\haf}{\operatorname{haf}}
\newcommand{\pf}{\operatorname{pf}}
\newtheorem{theo}{Theorem}[section]
\newtheorem{prop}[theo]{Proposition}
\newtheorem{lem}[theo]{Lemma}
\title{Shortest $(A+B)$-path packing via hafnian}
\author{Hiroshi HIRAI and Hiroyuki NAMBA\\
	Department of Mathematical Informatics,\\
	 Graduate School of Information Science and Technology,\\
	 The University of Tokyo, Tokyo, 113-8656, Japan.\\
	\texttt{\normalsize hirai@mist.i.u-tokyo.ac.jp} \\
	\texttt{\normalsize hiroyukinannba@gmail.com}}
\begin{document}
\maketitle
\begin{abstract}
Bj\"{o}rklund and Husfeldt developed a randomized polynomial time algorithm to solve the shortest two disjoint paths problem. Their algorithm is based on computation of permanents modulo 4 and the isolation lemma. In this paper, we consider the following generalization of the shortest two disjoint paths problem, and develop a similar algebraic algorithm.
The shortest perfect $(A+B)$-path packing problem is:
given an undirected graph $G$ and two disjoint node subsets $A,B$ with even cardinalities,
find  shortest $|A|/2+|B|/2$ disjoint paths whose ends are both in $A$ or both in $B$.
Besides its NP-hardness, we prove that this problem can be solved in randomized polynomial time if $|A|+|B|$ is fixed.
Our algorithm basically follows the framework of Bj\"{o}rklund and Husfeldt but uses a new technique: computation of hafnian modulo $2^k$ combined with
Gallai's reduction from $T$-paths to matchings.
We also generalize our technique for solving other path packing
problems, and discuss its limitation.

\vspace{6mm}
\noindent \textbf{Keywords:}\hspace{2mm} shortest disjoint paths problem, hafnian, randomized polynomial time algorithm
\end{abstract}

\section{Introduction}
{\em The shortest two disjoint paths problem} is: given an undirected graph $G = (V,E)$ and $s_1,t_1,s_2,t_2 \in V$, find two disjoint paths, one connecting $s_1$ and $t_1$ and the other connecting~$s_2$ and $t_2$, such that the sum of their lengths is minimum.
Although the length-less version, the two disjoint paths problem, is elegantly solved \cite{seymour,shiloach,thomassen},
no polynomial time algorithm was known for this generalization.
Recently, Bj\"{o}rklund and Husfeldt \cite{twopaths} obtained the first polynomial time algorithm.

\begin{theo}[\cite{twopaths}]\label{bh}
There exists a randomized polynomial time algorithm to solve the shortest two disjoint paths problem. 
\end{theo}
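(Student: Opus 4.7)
The plan is to encode pairs of disjoint $(s_1,t_1)$- and $(s_2,t_2)$-paths algebraically so that a shortest such pair can be read off from the low-order coefficients of a matrix permanent, and then to compute that permanent modulo $4$ efficiently.

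First, I would attach the monomial $x^{\ell(e)}$ to each edge $e$, where $\ell(e)$ is its length, and also attach an independent random label $y_e$ from a sufficiently large field of characteristic $2$ to discourage accidental cancellation. I would then form an auxiliary graph $G'$ from $G$ by adjoining ``closing'' edges $\{s_1,t_1\}$ and $\{s_2,t_2\}$, and build a weighted matrix $M$ whose permanent is a generating polynomial over those cycle covers of $G'$ that include both closing edges. Each such cycle cover decomposes into two closed trails which, after deletion of the closing edges, yield walks from $s_i$ to $t_i$ for $i=1,2$, possibly together with extra cycles, repeated vertices, or shared edges.

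The crux is a parity-cancellation argument. Only cycle covers whose two trails are genuinely vertex-disjoint simple paths should survive. I would exhibit a fixed-point-free involution $\iota$ on the set of ``bad'' cycle covers (those containing an extra cycle, a repeated vertex, or a trail-trail intersection) such that each $\iota$-orbit contributes a multiple of $4$ to the permanent. Natural candidates are cycle-reversal involutions at a canonically chosen offending cycle, and intersection-swap involutions at a lexicographically first offending vertex. After this cancellation, the permanent of $M$ reduced modulo $4$ equals a weighted sum over disjoint path pairs, each contributing $x^{\ell(P_1)+\ell(P_2)}$ times a nonzero monomial in the $y_e$.

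Next, I would invoke the isolation lemma of Mulmuley--Vazirani--Vazirani on the $y_e$ (lifted over the integers) to guarantee that, with probability at least $1/2$, the minimum-length disjoint-path pair is unique, so that the contributions isolated in the previous step do not themselves cancel. Consequently, the smallest exponent of $x$ whose coefficient in the permanent of $M$ is nonzero modulo $4$ equals the sought shortest total length, and the pair itself can be recovered by self-reducibility. Finally, the permanent of an integer matrix modulo a small power of $2$ can be evaluated in polynomial time (Valiant), so one computes the coefficients of the permanent modulo $4$ and scans for the smallest nonzero exponent. The main obstacle is the cancellation step: designing the auxiliary graph so that every non-path cycle cover admits a canonical fixed-point-free involution whose orbit sums vanish modulo $4$ is delicate, and degenerate configurations such as length-$2$ cycles, shared edges between the two trails, and highly symmetric subgraphs can trivialize the natural involutions. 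Handling these uniformly is what forces working modulo $4$ rather than $2$ and is the combinatorial heart of the theorem.
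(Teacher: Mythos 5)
Your proposal follows the original route of Bj\"orklund--Husfeldt \cite{twopaths} (directed cycle covers, a permanent modulo $4$, involution-based cancellation of non-disjoint configurations), not the route of this paper, which derives Theorem~\ref{bh} as the special case $|A|=|B|=2$ of Theorem~\ref{theo:main} using Gallai's reduction and a \emph{hafnian} modulo a power of $2$. The genuine gap is the cancellation step, and you name it yourself: you say you ``would exhibit'' a fixed-point-free involution on bad cycle covers whose orbits contribute multiples of $4$, but you never construct it, and you concede that $2$-cycles, shared edges and symmetric configurations defeat the natural candidates. Since everything downstream (reading the shortest length off the lowest surviving exponent, self-reducibility) depends on exactly this lemma, the proposal as written is a plan rather than a proof; note also that Bj\"orklund--Husfeldt in fact need a linear combination of \emph{three} permanents to make the cancellation work, which is evidence that a single matrix with a single canonical involution will not suffice. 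A second, smaller problem is the randomization: you attach labels $y_e$ from a large field of characteristic $2$ \emph{and} apply the isolation lemma \cite{vazirani} to them ``lifted over the integers.'' These are two incompatible mechanisms. In both \cite{twopaths} and this paper the randomness enters solely as random integer edge weights used as exponents of $x$, and the point of computing modulo $4$ in the sense of Valiant \cite{valiant}, rather than over a field of characteristic $2$, is that the surviving terms carry coefficient $\pm 2$, which would vanish identically in characteristic $2$.

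For comparison, the paper sidesteps the involution problem entirely. In Gallai's auxiliary graph $H$, perfect matchings correspond \emph{exactly} to vertex-disjoint $T$-path packings together with a residual matching, so there are no bad configurations to cancel; each path $P_k$ of a packing lifts to precisely two matchings $M_{k,1},M_{k,2}$, and the sign pattern of $S'$ forces $s'(M_{k,1})+s'(M_{k,2})$ to equal $0$ when $P_k$ joins $A$ to $B$ and $\pm 2x^{w(P_k)}$ otherwise, as in (\ref{equ3}). Lemma~\ref{lem:hafs} then gives $\haf S'=\sum_{\cal P}(-1)^{\theta({\cal P})}2^{\tau}x^{w({\cal P})}(1+xf_{\cal P}(x))$, the shortest packing is recovered modulo $2^{\tau+1}$ (or modulo $4$ with the economical $h$-representation of Section~3.2), and the isolation lemma and edge-deletion self-reduction conclude exactly as in your final step. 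If you wish to complete your own route, the statement you must actually prove is Bj\"orklund--Husfeldt's main technical theorem; if you are willing to change routes, Gallai's reduction replaces the delicate involution by an exact correspondence.
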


Their algorithm is build on striking application of computation of permanents modulo~$4$ by Valiant \cite{valiant} and the isolation lemma by Mulmuley--Vazirani--Vazirani~\cite{vazirani}. 

In this paper, we consider a generalization of the shortest two disjoint paths problem
and develop a randomized polynomial time algorithm based on a similar algebraic technique. 
Let us introduce our problem. 
For $T\subseteq V$, a $T$-$path$ is a path connecting distinct nodes in $T$. 
We are given two disjoint terminal sets $A$ and $B$ with even cardinalities. 
A {\em perfect (A+B)-path packing} is a set ${\cal P}$ of node-disjoint paths such that each path is an $A$-path or $B$-path and $|{\cal P}|=|A|/2+|B|/2$. 
The {\em size} of a perfect $(A+B)$-path packing is defined as the total sum of the length of each path, where the length of a path is defined as the number of edges in the path. 
{\em The shortest perfect $(A+B)$-path packing problem} asks to find a perfect $(A+B)$-path packing with minimum size. 
It will turn out that this problem is NP-hard. 
In the case where $|A|=|B|=2$, the problem is the shortest two disjoint paths problem above. 
When $B$ is empty, the problem is the disjoint $A$-path problem by Gallai \cite{gallai}. 
Our main result says that the problem is tractable, provided $|A|+|B|$ is fixed. 
\begin{theo}\label{theo:main}
There exists a randomized algorithm to solve the shortest perfect $(A+B)$-path packing problem in $O(f(|V|)^{|A|+|B|})$ time, where $f$ is a polynomial. 
\end{theo}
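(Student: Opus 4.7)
My plan is to generalize the Bj\"{o}rklund--Husfeldt framework of Theorem~\ref{bh} by replacing the role of permanents (suited to bipartite, walk-style encodings) with \emph{hafnians}, which naturally count perfect matchings in general graphs and are therefore well-matched to packings of undirected paths. Setting $T := A\cup B$, I would first invoke a Gallai-type reduction: construct an auxiliary graph $H$ on $O(|V|)$ vertices by splitting each non-terminal node into twin copies, so that vertex-disjoint $T$-path packings in $G$ lift to certain perfect matchings of $H$. To enforce the constraint that every path has both ends in $A$ or both in $B$, I would enumerate over all pairings $\pi$ of $A$ among itself and of $B$ among itself; since $|A|+|B|$ is fixed there are only $O(1)$ such pairings, and for each one I build a tailored symmetric matrix $M_\pi$ so that only matchings realizing $\pi$ contribute.

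The algebraic core of the algorithm is then to place into each edge $e$ of $H$ a weight of the form $w_e\, y^{\ell(e)}$, where $\ell(e)\in\{0,1\}$ marks whether $e$ corresponds to an original edge of $G$, and $w_e$ is a random integer weight chosen so that the Isolation Lemma of \cite{vazirani} guarantees, with high probability, a unique minimum-length perfect $(A+B)$-packing. The polynomial $\haf M_\pi(y)$ then encodes, in its coefficient of $y^L$, a weighted count of the perfect matchings of $H$ associated with $(A+B)$-packings of total length $L$. Reading off the smallest $L$ with nonzero coefficient (over all $\pi$) yields the desired minimum length, after which standard self-reducibility reconstructs the packing. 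Crucially, the hafnian analogue of Valiant's algorithm computes $\haf M_\pi(y) \bmod 2^k$ in polynomial time for any constant $k$; evaluating at $O(|V|)$ values of $y$ and interpolating recovers the low-degree $y$-coefficients modulo $2^k$, and the entire procedure runs in time $f(|V|)^{|A|+|B|}$ because we repeat it over the $O((|A|+|B|)!)$ pairings and use a modulus $2^k$ whose exponent grows with $|A|+|B|$.

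The main obstacle, and the key new technical ingredient, is Step~3: showing that \emph{spurious} matchings of $H$ vanish modulo the chosen power of~$2$. These spurious contributions come in several flavors: vertex-disjoint cycles through twin non-terminals inherent to the Gallai gadget, matchings whose induced path structure fails the $(A+B)$-separation condition, and valid packings counted with built-in multiplicities from the non-terminal gadget. For BH's two-paths result this cancellation was engineered through a fixed-point-free involution on pairs of walks that swap at their first crossing, producing a factor of~$2$; here I would need an analogous but substantially more delicate involution on the hafnian expansion, designed so that each bad matching pairs with a companion of opposite parity in a way that produces multiplicity divisible by $2^k$, while genuine $(A+B)$-packings survive with odd (or at least nonzero mod $2^k$) coefficient. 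Engineering this involution, and showing that the required modulus depends only on $|A|+|B|$, is where the heart of the proof will lie.
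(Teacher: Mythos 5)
Your overall skeleton matches the paper's: Gallai's twin-copy reduction to perfect matchings of an auxiliary graph $H$, a hafnian of a polynomial symmetric matrix computed modulo a power of $2$, the isolation lemma to force a unique optimum, and edge-deletion self-reducibility to recover the packing. But you have left unproved exactly the two ingredients that constitute the paper's actual content, and for the central one your proposed route is off target. The paper does \emph{not} enumerate pairings $\pi$ of $A$ and of $B$, and it does not need a ``delicate involution'' pairing bad matchings with companions of opposite parity. Instead it builds a single signed matrix $S'$ from $H$ by negating the entries on edges joining each terminal $t\in B$ to the copy set $U'$. For a $T$-path $P_k$ the Gallai gadget admits exactly two lifts $M_{k,1},M_{k,2}$ to matchings, and with these signs one computes directly (Lemma \ref{lem:hafs}, equation (\ref{equ3})) that $s'(M_{k,1})+s'(M_{k,2})=0$ when $P_k$ joins $A$ to $B$, and equals $\pm 2x^{w(P_k)}$ otherwise. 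Hence every packing containing a cross path cancels \emph{exactly} (not merely modulo $2^k$), every genuine $(A+B)$-packing survives with coefficient $\pm 2^{\tau}$ where $\tau=(|A|+|B|)/2$, and the modulus $2^{\tau+1}$ suffices. Your worry about ``spurious cycles through twin non-terminals'' is also misplaced: the leftover perfect matchings of $H-{\cal P}$ contribute a factor $1+xf_{\cal P}(x)$, since the all-$E_=$ leftover has weight $0$ and every other leftover has weight at least $1$, so they never touch the leading term and require no cancellation at all. Without the sign construction and the computation of $s'(M_{k,1})+s'(M_{k,2})$, your Step~3 is an acknowledged hole precisely where the theorem's proof should be.

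The second gap is your one-sentence appeal to ``the hafnian analogue of Valiant's algorithm.'' No such algorithm was previously known; Theorem \ref{theo:hafmod2k} (hafnian of a polynomial symmetric matrix modulo $2^k$ in $O(f(n,N)^k)$ time) is itself one of the paper's contributions, proved via a Laplace-type expansion for hafnians (Lemma \ref{lem:laplace}), an elimination step using $(i,j;c)$-operations (Lemma \ref{lem:hafhenka}), and a recursion that trades one factor of $2$ for polynomially many smaller hafnians, with the base case $k=1$ handled by $\haf A \equiv \pf \tilde{A} \pmod 2$. Relatedly, your plan to recover coefficients by ``evaluating at $O(|V|)$ values of $y$ and interpolating'' modulo $2^k$ does not work as stated, because the relevant Vandermonde systems are generally not invertible modulo $2^k$; the paper avoids this by running the whole algorithm directly on matrices with polynomial entries, truncating higher-degree terms as it recurses.
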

Our algorithm basically follows the framework of Bj\"{o}rklund--Husfeldt \cite{twopaths} but we use a new technique: computation of hafnian modulo $2^k$, instead of permanent modulo $4$, combined with a classical reduction technique  to matching by Gallai (for $T$-paths) \cite{gallai} and Edmonds (for odd path); see \cite[Section 29.11e]{schrijver}.

\paragraph{Related work}
Colin de Verdi\`ere--Schrijver \cite{schrijver2} and Kobayashi--Sommer \cite{kobayashi2} gave combinatorial polynomial time algorithms for shortest disjoint paths problems in planar graphs with special terminal configurations. 
Karzanov \cite{karzanov} and Hirai--Pap \cite{hirai} showed the polynomial time solvability of a shortest version of edge-disjoint $T$-paths problem. 
Yamaguchi~\cite{yamaguchi} reduced the shortest disjoint ${\cal S}$-paths problem (nonzero $T$-paths problem in a group labeled graph, more generally) to weighted matroid matching. 
Kobayashi--Toyooka~\cite{kobayashi} developed a randomized polynomial time algorithm for the shortest nonzero $(s,t)$-path problem in a group labeled graph; their algorithm is also based on the framework of Bj\"{o}rklund--Husfeldt. 

It is well-known that the hafnian of the adjacency matrix of a graph is equal to the number of perfect matchings. 
By utilizing the hafnian,
Bj\"{o}rklund~\cite{bjorklundSODA12} 
developed a faster algorithm 
to count the number of perfect matchings. 

\paragraph{Organization}
The rest of this paper is organized as follows. 
In Section 2, we first show that hafnian modulo $2^k$ for fixed $k$ is computable in polynomial time.
This direct generalization of permanent computation modulo $2^k$ seems new and interesting in its own right.
Next we present the randomized algorithm in Theorem \ref{theo:main}. 
In Section 3, we verify the hardness of the $(A+B)$-path packing problem, and then generalize our technique for solving other path packing problems, and discuss its limitation.

\section{Algorithm}\label{sec:haf}
In this section, we first provide an algorithm to compute hafnian modulo $2^k$, and next present a randomized polynomial time algorithm to solve the shortest perfect $(A+B)$-path packing problem for fixed $|A|+|B|$. 
An undirected pair or edge $\{i,j\}$ is simply denoted by~$ij$.

\subsection{Computing Hafnian Modulo $2^k$}
The {\em hafnian} $\haf A$ of a $2n\times 2n$ symmetric matrix $A=(a_{ij})$ is defined by
\begin{equation*}\haf A:=\sum_{M\in {\cal M}}\prod_{ij\in M}a_{ij}, 
\end{equation*}
where ${\cal M}$ is the set of all partitions of $\{1,2,3,\dots,2n\}$ into $n$ pairs.

Let ${\cal S}(n,N)$ denote the set of all $2n\times 2n$ symmetric matrices with zero diagonal each of whose element is a univariate polynomial of degree at most $N$. 
Let $\haf_{2^k} A$ denote the hafnian of $A$ modulo $2^k$. 
The main result of this subsection is the following: 

\begin{theo}\label{theo:hafmod2k}
There exists a bivariate polynomial $f$ such that
for all $A\in {\cal S}(n,N)$, $\haf_{2^k} A$ can be computed in  $O(f(n,N)^k)$ time.
\end{theo}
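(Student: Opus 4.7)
The plan is to generalize Valiant's method for the permanent modulo $2^k$ to the hafnian, with the Pfaffian playing the role analogous to the determinant. The starting observation is that $\haf A \equiv \pf \tilde A \pmod 2$, where $\tilde A$ is the antisymmetrization of $A$ obtained by negating the entries below the diagonal; here the discrepancy is a sum of signed matching contributions that pair up into even integers. This settles the case $k=1$: the Pfaffian over any commutative ring is computable in polynomial time by a Bareiss-style elimination, and arithmetic in polynomials of degree at most $N$ costs $O(N)$ per ring operation.

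For $k \ge 2$, I proceed by induction. Writing $\haf A = \pf \tilde A + 2\, G(A)$, where
\begin{equation*}
G(A) := \sum_{M:\operatorname{sgn}(M)=-1}\prod_{ij\in M}a_{ij}
\end{equation*}
is the sum over matchings with $-1$ Pfaffian sign, the task reduces to computing $G(A) \bmod 2^{k-1}$. The central technical reduction is to express $G(A)$ itself as $\haf A'$ for some $A'\in {\cal S}(n',N')$ whose parameters are bounded by constant factors of $n$ and $N$. Granting this, the recurrence
\begin{equation*}
T(n,N,k)\le p(n,N)+T(cn,cN,k-1)
\end{equation*}
telescopes to $O(f(n,N)^k)$ for a suitable polynomial $f$ absorbing the constant $c$.

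The main obstacle is constructing the reduction $A \mapsto A'$. The idea is to use polynomial entries as a sign-tracking device: introduce a formal indeterminate $t$ and replace each $a_{ij}$ by a polynomial in $t$ chosen so that the crossing parity of a matching (which determines $\operatorname{sgn}(M)$) is recorded in a controlled set of exponents of $t$. Then $G(A)$ is recoverable as a specific coefficient in $t$ of the hafnian of the modified matrix, and this coefficient extraction can itself be cast as a hafnian computation on a matrix of the same class by evaluating at a small set of integer points of $t$ and taking an appropriate integer combination (polynomial interpolation). This is precisely why the input class ${\cal S}(n,N)$ must allow polynomial entries from the outset: the recursion inherently enlarges the polynomial degree at each step. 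The delicate point is to ensure the enlargement is only a constant factor per level, so that after $k$ levels the dimension and degree remain polynomially bounded in $n$ and $N$, yielding the stated $O(f(n,N)^k)$ bound.
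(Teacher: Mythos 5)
Your base case $k=1$ and the identity $\haf A = \pf\tilde A + 2\,G(A)$ match the paper's starting point, and your complexity recurrence would telescope correctly if the inductive step were in place. But the inductive step is exactly where the proof is missing: you must compute $G(A)\bmod 2^{k-1}$, and your plan is to realize $G(A)$ as $\haf A'$ for a matrix $A'$ of comparable size by ``sign-tracking'' with an indeterminate $t$. As sketched, this cannot work. Here $G(A)$ sums over matchings $M$ with $\operatorname{sgn}(M)=-1$, and $\operatorname{sgn}(M)=(-1)^{\mathrm{cr}(M)}$ where $\mathrm{cr}(M)$ counts crossing pairs of edges of $M$; this is a genuinely quadratic function of the edge set, not a sum of per-edge contributions. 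If you decorate each entry $a_{ij}$ with a monomial $t^{e_{ij}}$, the exponent a matching accumulates is the linear functional $\sum_{ij\in M}e_{ij}$, and for $2n\ge 6$ no choice of the $e_{ij}$ makes this agree with $\mathrm{cr}(M)$ modulo $2$ on all perfect matchings: that would amount to a Pfaffian orientation of $K_{2n}$, which does not exist for $n\ge 3$ (it contains the non-Pfaffian $K_{3,3}$ as a spanning subgraph; this is the same obstruction as in the P\'olya permanent problem). Using general polynomials in $t$ instead of monomials does not help, since each matching would still have to be routed to a coefficient determined by edge-local data. So the coefficient-extraction step has nothing to extract, and the reduction $A\mapsto A'$ is not merely ``delicate'' but unavailable in the proposed form.

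The paper sidesteps this entirely: it never packages the mod-$2$ correction as a single hafnian. Instead it applies symmetric row/column operations (the $(i,j;c)$-operation, whose effect on the hafnian is controlled by Lemma~\ref{lem:hafhenka}) to make the low-degree coefficients of the entries $a_{1j}$ $(j\ge 3)$ even, and then uses the Laplace-type expansion of Lemma~\ref{lem:laplace}. In the resulting identity (\ref{zenka}) every term other than $b_{12}\haf A_{2n}[1,2]$ carries an explicit factor of $2$ --- either because $b_{1j}$ is even, or because $\haf A_{j-1}(2\to j)$ expands with a visible factor $2$ owing to its duplicated row and column --- so computing $\haf A$ modulo $2^k$ reduces to one hafnian modulo $2^k$ of a smaller matrix plus polynomially many hafnians modulo $2^{k-1}$, all of matrices that remain in ${\cal S}(n,d)$ after truncating higher-degree terms. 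To salvage your outline you would need a device of this kind that manufactures the factor of $2$ structurally, rather than by encoding the Pfaffian sign in edge weights.
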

We prove Theorem $\ref{theo:hafmod2k}$ by the similar way to
that for permanents modulo $2^k$ \cite{valiant} and that for permanents of polynomial matrices modulo $2^k$~\cite{twopaths,kobayashi}. 
First we verify Theorem $\ref{theo:hafmod2k}$ for $k=1$. 
Let $\tilde A=(\tilde{a}_{ij})$ be a skew-symmetric matrix obtained from $A$ by replacing~$a_{ij}$ by $-a_{ij}$ if $i > j$. 
Modulo $2$, $\haf A$ coincides with $\pf \tilde{A}$ (Pfaffian of $\tilde{A}$). 
Hence $\haf_2 A$ can be obtained in time polynomial in $n$ and $N$
by computing $\sqrt{\det \tilde{A}}$ (mod $2$).

Next, we consider the case of $k\geq 2$. 
We use a formula like the Laplace expansion of determinants. Let $A[i,j]$ denote the matrix obtained from $A$ by removing the row $i$, row $j$, column $i$, and column $j$. For distinct $i,j,p,q$, let $A[i,j,p,q]:=(A[i,j])[p,q]$. 
\begin{lem}\label{lem:laplace}
\begin{itemize}
\item[\rm (1)]
$\displaystyle\haf A=\sum_{j:\,j\neq i}a_{ij}\haf A[i,j]$. \\
\item[\rm (2)] 
$\haf A=a_{ij}\haf A[i,j]+\displaystyle\sum_{pq:\,p,q\not\in\{i,j\}, p\neq q }(a_{ip}a_{jq}+a_{iq}a_{jp})\haf A[i,j,p,q]$. 
\end{itemize}
\end{lem}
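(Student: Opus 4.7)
The plan is to prove both identities purely combinatorially, by partitioning the set $\mathcal{M}$ of perfect matchings of $\{1,2,\ldots,2n\}$ according to which pair(s) cover the distinguished element(s) $i$ (and $j$). This mirrors the Laplace expansion for determinants and the analogous expansion of the permanent.

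For (1), fix $i$. In every $M\in\mathcal{M}$, the element $i$ appears in exactly one pair $ij$ with $j\neq i$. Conversely, for each $j\neq i$, the matchings containing the pair $ij$ are in bijection with perfect matchings of $\{1,\ldots,2n\}\setminus\{i,j\}$, i.e.\ with the index set of $\haf A[i,j]$. Grouping the terms of $\haf A$ by this partner $j$ and factoring out $a_{ij}$ gives
$$\haf A=\sum_{j\neq i}a_{ij}\!\!\sum_{M'\in \mathcal{M}(A[i,j])}\prod_{pq\in M'}a_{pq}=\sum_{j\neq i}a_{ij}\haf A[i,j].$$

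For (2), fix distinct $i,j$ and split $\mathcal{M}$ into two classes. If $ij\in M$, then the remaining $2n-2$ elements form a perfect matching of $\{1,\ldots,2n\}\setminus\{i,j\}$, so the total contribution is $a_{ij}\haf A[i,j]$. Otherwise $i$ is paired with some $p\notin\{i,j\}$ and $j$ with some $q\notin\{i,j,p\}$; thus the pairs of $M$ incident to $\{i,j\}$ are exactly $\{ip,jq\}$ for an unordered pair $\{p,q\}\subseteq\{1,\ldots,2n\}\setminus\{i,j\}$ with $p\neq q$. For each such unordered pair, the two possible assignments ($i\leftrightarrow p$, $j\leftrightarrow q$) and ($i\leftrightarrow q$, $j\leftrightarrow p$) contribute $a_{ip}a_{jq}$ and $a_{iq}a_{jp}$ respectively, and the remaining $2n-4$ elements form a matching indexed by $\mathcal{M}(A[i,j,p,q])$. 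Summing yields
$$\haf A=a_{ij}\haf A[i,j]+\sum_{\substack{pq:\,p,q\notin\{i,j\}\\ p\neq q}}\bigl(a_{ip}a_{jq}+a_{iq}a_{jp}\bigr)\haf A[i,j,p,q],$$
which is exactly the claimed formula.

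There is no serious obstacle; the only point that requires some care is keeping the bookkeeping between ordered and unordered pairs consistent in part (2). Since the summand $a_{ip}a_{jq}+a_{iq}a_{jp}$ is symmetric under swapping $p$ and $q$, and the hafnian $\haf A[i,j,p,q]$ depends only on the set $\{i,j,p,q\}$, indexing by unordered pairs $\{p,q\}$ and summing the two orderings inside the coefficient is precisely what is required to cover every matching not containing $ij$ exactly once.
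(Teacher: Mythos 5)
Your proof is correct and follows essentially the same idea as the paper's: part (1) is the identical partition of perfect matchings by the partner of $i$, and part (2) amounts to the same grouping by the partners of $i$ and $j$ with the ordered pairs $(p,q)$ and $(q,p)$ merged into one unordered term. The only cosmetic difference is that the paper obtains (2) by applying (1) twice in succession rather than by a direct combinatorial split, but the underlying decomposition is the same.
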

\begin{proof}
(1)\:For $j\neq i$, let ${\cal M}_j$ be the set of all $M\in{\cal M}$ that contain $ij$. Since $\{{\cal M}_j\mid j\neq i\}$ is a partition of ${\cal M}$, we obtain
\begin{equation*}\haf A:=\sum_{j:\,j\neq i}a_{ij}\sum_{M\in{\cal M}_j}\prod_{pq\in M\backslash\{ij\}}a_{pq}=\sum_{j:\,j\neq i}a_{ij}\haf A[i,j]. 
\end{equation*}
(2)\:By using (1) repeatedly, we obtain
\begin{eqnarray*}
\haf A&=&\sum_{p:\,p\neq i}a_{ip}\haf A[i,p]
=a_{ij}\haf A[i,j]+\sum_{p:\,p\not\in\{i,j\}}a_{ip}\haf A[i,p]\\
&=&a_{ij}\haf A[i,j]+\sum_{p:\,p\not\in\{i,j\}}a_{ip}\sum_{q:\,q\not\in\{i,j,p\}}a_{jq}\haf A[i,j,p,q]\\
&=&a_{ij}\haf A[i,j]+\displaystyle\sum_{(p,q):\,p,q\not\in\{i,j\},p\neq q}a_{ip}a_{jq}\haf A[i,j,p,q]. 
\end{eqnarray*}
Combining the terms for $(p,q)$ and $(q,p)$, we obtain (2). 
\end{proof}

For $A\in {\cal S}(n,N)$, let $A(i,j;c)$ denote the matrix obtained from $A$ by adding $c$ multiple of column $i$ to column $j$, adding $c$ multiple of row $i$ to row $j$, and replacing the $jj$th element with zero. 
We refer to this operation as the {\em $(i,j;c)$-operation}. 
Note that differences between $A$ and $A(i,j;c)$ occur only in row $j$ and column $j$, and that $A(i,j;c)$ also belongs to $S(n,N)$.
We investigate how the hafnian changes by the $(i,j;c)$-operation. 
Let $A(i\to j)$ denote the matrix obtained from $A$ by replacing row $j$ with row $i$ and column $j$ with column $i$.

\begin{lem}\label{lem:hafhenka}
$\haf A(i,j;c)=\haf A+c\:\haf A(i\to j)$. 
\end{lem}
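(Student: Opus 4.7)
The plan is to expand both $\haf A(i,j;c)$ and $\haf A(i\to j)$ along row $j$ via Lemma~\ref{lem:laplace}(1), and exploit the crucial fact that the $(i,j;c)$-operation only touches row $j$ and column $j$, so those modifications disappear the moment we delete row $j$ and column $j$.

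First I would write down the entries of $A(i,j;c)$ explicitly. After the column update and the row update (using $a_{jj}=0$ and the symmetry of $A$), the only entries that differ from $A$ lie in row $j$ and column $j$, and they are
\begin{equation*}
A(i,j;c)_{jk} = a_{jk}+c\,a_{ik}, \qquad A(i,j;c)_{kj} = a_{kj}+c\,a_{ki} \qquad (k\ne j),
\end{equation*}
with $A(i,j;c)_{jj}=0$. Symmetry is preserved because $A$ is symmetric. This is the only ``setup'' step; it is routine.

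Next I apply Lemma~\ref{lem:laplace}(1) to $A(i,j;c)$ along row $j$. The key observation is that the submatrix $A(i,j;c)[j,k]$ obtained by deleting rows $j,k$ and columns $j,k$ coincides with $A[j,k]$, because every modified entry of $A(i,j;c)$ lives in the deleted row $j$ or column $j$. Hence
\begin{equation*}
\haf A(i,j;c) \;=\; \sum_{k\ne j}(a_{jk}+c\,a_{ik})\,\haf A[j,k]
\;=\; \sum_{k\ne j}a_{jk}\,\haf A[j,k] \;+\; c\sum_{k\ne j}a_{ik}\,\haf A[j,k].
\end{equation*}
By Lemma~\ref{lem:laplace}(1) applied to $A$ itself (expanding along row $j$), the first sum is exactly $\haf A$.

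Finally I identify the second sum as $\haf A(i\to j)$. The matrix $A(i\to j)$ has row $j$ equal to row $i$ of $A$, and by the same reasoning as above $A(i\to j)[j,k] = A[j,k]$, since only row/column $j$ was altered. Expanding $\haf A(i\to j)$ along row $j$ via Lemma~\ref{lem:laplace}(1) therefore yields $\sum_{k\ne j}a_{ik}\,\haf A[j,k]$, which matches the correction term. (The $k=i$ case is harmless since $a_{ii}=0$.) Combining these identifications gives the claim.

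There is no real obstacle: once the explicit form of $A(i,j;c)$ and the invariance $A(i,j;c)[j,k]=A(i\to j)[j,k]=A[j,k]$ are recorded, everything reduces to a single application of Lemma~\ref{lem:laplace}(1) on three different matrices. The only point where care is needed is the $(j,j)$-entry computation (the $2c\,a_{ij}$ that appears from combining the row and column updates and is then zeroed out), which confirms that $A(i,j;c)$ actually lies in ${\cal S}(n,N)$ and justifies the definition of the operation.
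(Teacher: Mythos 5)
Your proposal is correct and follows essentially the same route as the paper: expand $\haf A(i,j;c)$ along row $j$ and column $j$ via Lemma~\ref{lem:laplace}(1), observe that the deleted-row/column submatrices are unchanged, and split the resulting sum into $\haf A$ plus $c$ times the row-$j$ expansion of $\haf A(i\to j)$. Your treatment is slightly more explicit about the entry computations (the $(j,j)$-entry and the $k=i$ term), but the argument is the same.
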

\begin{proof}
Let $\tilde{a}_{pq}$ denote the $pq$th element of $A(i,j;c)$. We use Lemma \ref{lem:laplace} (1) with respect to row~$j$ and column $j$.
\begin{eqnarray*}
	&&\haf A(i,j;c)= \sum_{k: k \neq j} \tilde a_{kj} \haf A[k,j] \\[3pt]
  && =\sum_{k:\,k\neq j}a_{kj}  \haf A[k,j] +\sum_{k:\,k\neq j} c a_{ki} \haf A[k,j] \\[3pt]
	&&= \haf A+c\:\haf A(i\to j). 
\end{eqnarray*}
\end{proof}
Let $d$ be a fixed positive integer. 
A term of a polynomial is said to be {\em lower} if its degree is at most $d$ and {\em higher} otherwise.
A polynomial $f$ is said to be {\em even}
if all coefficients of lower terms of the polynomial $f(x)$ are even.
For a polynomial $f(x)$ that is not even, let $m(f(x))$ denote the lowest degree of terms with odd coefficients.

Let $A=(a_{ij}) \in {\cal S}(n,d)$. We are going to show that all lower terms of $\haf A$ modulo $2^k$ can be computed in time polynomial in $n$ and $d$. 
The hafnian does not change if we exchange row $i$ and row $j$, and column $i$ and column $j$.
Hence we exchange rows and columns of $A$ in advance so that $a_{12}$ is a minimizer of $m(a_{1j})$ in $a_{1j}\:(j=2,\dots, 2n)$ that are not even. 
Next we find a polynomial $c_j$ such that $c_ja_{12}+a_{1j}$ is even for $j=3,\dots 2n$. The computation can easily be done in time polynomial in $n$ and $d$ \cite[Section 3.2]{twopaths}. 
Using the $(2,j;c_j)$-operation for $j=3,\dots 2n$ in order, we obtain matrices $A_3:=A(2,3;c_3),\:A_4:=A_3(2,4;c_4),\dots, A_{2n}:=A_{2n-1}(2,2n;c_{2n})$. 
Then $1j$ elements of $A_{2n}$ are even if $j\geq 3$. 
Applying Lemma $\ref{lem:hafhenka}$ repeatedly, we obtain
\begin{equation*}\haf A_{2n}=\haf A+\sum_{j=3}^{2n}c_j\haf A_{j-1}(2\to j), 
\end{equation*}
where $A_2=A$. Using Lemma \ref{lem:laplace} (1) for $A_{2n}=(b_{ij})$, we obtain
\begin{equation}\haf A=b_{12}\haf A_{2n}[1,2]+\sum_{j=3}^{2n}b_{1j}\haf A_{2n}[1,j]-\sum_{j=3}^{2n}c_j\haf A_{j-1}(2\to j). 
\label{zenka}\end{equation}
Though there may be higher terms in elements of matrices in (\ref{zenka}), we may replace these higher terms with $0$ (since our goal is computing lower terms). 
Similarly we may replace higher terms in $b_{1j}\:(j=2,\dots,2n)$ with $0$.
Hence all matrices in right-hand side of (\ref{zenka}) can be regarded in ${\cal S}(n-1,d)$ or ${\cal S}(n,d)$. 

Next we discuss the second and third terms of the right-hand side in detail.  
For the second term, we obtain $b_{1j}\haf A_{2n}[1,j]$ modulo $2^k$ from $\haf A_{2n}[1,j]$ modulo $2^{k-1}$ since $b_{1j}\:(3\leq j\leq 2n)$ are even. 
Therefore we need to compute hafnians of $2n-2$ polynomial matrices in ${\cal S}(n-1,d)$ modulo $2^{k-1}$. 

Next we consider the third term. 
For $A(i\to j)$, it holds $a_{ip}=a_{jp},\:a_{iq}=a_{jq}$ and $a_{ij}=0$ 
(since $A$ has zero diagonals).
Hence, applying Lemma \ref{lem:laplace} (2) to $A(i\to j)$, we obtain the following: 
\begin{equation*}
\haf A(i\to j)=\sum_{p,q}2a_{ip}a_{jq}\haf A[i,j,p,q]. 
\end{equation*}
Hence we obtain $\haf A(i\to j)$ modulo $2^k$ from
 hafnians of $\binom{2n-2}{2}$ matrices in ${\cal S}(n-2,d)$
modulo $2^{k-1}$. 

In this way, our algorithm recursively computes lower terms of $\haf A$ modulo $2^k$ according to (\ref{zenka}).
We are now ready to prove Theorem \ref{theo:hafmod2k}. 
\begin{proof}[Proof of Theorem $\ref{theo:hafmod2k}$]
Let $T(n,d,k)$ be the computational complexity of computing all lower terms of the hafnian of a matrix in ${\cal S}(n,d)$. 
From (\ref{zenka}) and the argument after (\ref{zenka}), it follows
\begin{eqnarray*}T(n,d,k)&\leq&T(n-1,d,k)+(2n-2)T(n-1,d,k-1)\\&&+(2n-2)\binom{2n-2}{2}T(n-2,d,k-1)+\mathrm{poly}(n,d), \end{eqnarray*}
where $\mathrm{poly}(n,d)$ is a polynomial of $n$ and $d$. 
Since $T(n,d,k)$ is monotone increasing on~$n$, it follows that
\begin{equation*} T(n,d,k)\leq T(n-1,d,k)+4n^3T(n,d,k-1)+\mathrm{poly}(n,d). \end{equation*}
Using this inequality repeatedly, we obtain
\begin{equation*} 
T(n,d,k)\leq 4n^4T(n,d,k-1)+\mathrm{poly}(n,d). 
\end{equation*}
$T(n,d,1)$ is a polynomial of $n$ and $d$ by the result of the case $k=1$. 
Hence there exists a polynomial $f$ of $n$ and $d$ such that for all positive integers $k$, $T(n,d,k)$ is $O(f(n,d)^{k})$. 

For $A\in {\cal S}(n,N)$, the degree of $\haf A$ is at most $nN$. 
Apply the above algorithm with $d=nN$, we obtain $\haf_{2^k} A$ in $O(f(n,nN)^{k})$ time. This completes the proof. 
\end{proof}

\subsection{Perfect $(A+B)$-Path Packing via Hafnian}\label{sec:algo}
Let $G =(V,E)$ be a simple undirected graph and $A,B$ disjoint node sets of even cardinalities. Let $n:=|V|$ and  $m:=|E|$. 
We can assume that $G=(V,E)$ has no edge with both endpoints in $A \cup B$; otherwise, replace each edge by a series of two edges. 
We consider a general case where $G$ has positive integer weight $w(e)$ on each edge $e$. 
We assume that the maximum value of the weight is bounded by a polynomial of $n$. 
For a path $P$, let $w(P)$ denote the sum of the weight of edges in $P$. 
The {\em size} of a set ${\cal P}$ of vertex-disjoint paths is defined as the total sum of $w(P)$ over $P\in{\cal P}$, and is  denoted by $w({\cal P})$. 

\paragraph{Gallai's construction}
From input $G,A,B$, we construct graph $H=(V_H,E_H)$ so that matchings in $H$ correspond to disjoint $T$-paths in $G$ (with $T = A \cup B$).
This construction is due to Gallai \cite{gallai}; see \cite[Section 73.1]{schrijver}. 
Let $U:= V\backslash (A\cup B)$. 
First we add to $G$ a copy of the subgraph of $G$ induced by $U$. 
The copy of a node $v\in U$ is denoted by $v'$.
Let $U':=\{v'\mid v\in U\}$, $V_H:=V \cup U'=A\cup B\cup U\cup U'$. 
Next, for each $v\in U$, add an edge $vv'$. 
The set of such edges is denoted by $E_=$. 
Finally, we add edge $uv'$ for each $uv\in E$ with $u\in A\cup B ,v\in U$. The set of all edges in $A\cup B\cup U'$ is denoted by $E'$. Let $E_H:=E\cup E'\cup E_=$. 
The weight $w$ is extended to $E_{H}\to\mathbb{Z}_{\geq 0}$ by
$$\begin{cases}
w(e):= 0 & \mathrm{if}\:e\in E_=,\\
w(uv'):=w(uv)& \mathrm{if}\:uv'\in E', u\in A\cup B, \\
w(u'v'):=w(uv)& \mathrm{if}\:u'v'\in E', u',v'\in U'. 
\end{cases}$$
A {\em perfect $(A\cup B)$-path packing} is a set of $|A|/2+|B|/2$ node-disjoint $(A\cup B)$-paths.
From a perfect matching $M$ of $H$, we obtain a perfect $(A\cup B)$-path packing ${\cal P}_M$ in $G$ as follows. For all $s\in A\cup B$, there exists a unique path $P=\{s,v_1,v_2,\dots,t\}$ in $H$ such that $(s,v_1)\in M$, $t\in (A\cup B)\backslash\{s\}$ and it goes through edges in $M$ and edges in $E_=$ alternately.
This path in $H$ determines an $(s,t)$-path in $G$ by picking up the only nodes in $(A\cup B)\cup U$ in the same order. 
Gathering up these paths, we obtain a perfect $(A\cup B)$-path packing ${\cal P}_M$ in $G$. 
Conversely, one can see that any perfect $(A\cup B)$-path packing in $G$ is obtained in this way. 
The size of ${\cal P}_M$ is at most the weight of $M$. 
They coincide if and only if all edges of $M$ not used by ${\cal P}_M$
belong to $E_=$. 

\paragraph{Matrices $S$ and $S'$}
Next we introduce a symmetric matrix $S$ associated with $H$.
Let $h:=|V_H|$. 
We can assume that $V_H=\{1,2,\dots, h\}$. 
Let $S=(s_{ij})$ be an $h\times h$ symmetric matrix defined by 
\begin{equation*}
s_{ij}:=
\begin{cases}x^{w(ij)}&\mathrm{if}\:ij\in E_{H}, \\0&\mathrm{otherwise}. \end{cases}
\end{equation*}
Recall that $w(ij)$ denotes the weight of the edge $ij$ in $H$. 

For $t\in A\cup B$, let $E_{t}$ denote the set of edges joining $t$ and $U$,
and let $E'_{t}$ denote the set of edges joining $t$ and $U'$. 
From the matrix $S$, we define a new matrix $S'=(s'_{ij})$ by
\begin{equation*}
s'_{ij}:=
\begin{cases}-s_{ij}&\mathrm{if}\:ij\in E'_{t}\:\mathrm{for\:some}\:t\in B, \\s_{ij}&\mathrm{otherwise}. \end{cases}
\end{equation*}
Let $\tau:=(|A|+|B|)/2$. For a perfect $(A+B)$-path packing 
${\cal P}$, let $\theta({\cal P})$ denote the number of even-length $B$-paths in ${\cal P}$. 
\begin{lem}\label{lem:hafs}
\begin{equation*}
\haf S'=\displaystyle\sum_{{\cal P}}(-1)^{\theta({\cal P})}2^{\tau}x^{w({\cal P})}(1+xf_{\cal P}(x)),
\end{equation*}
where ${\cal P}$ ranges over all perfect $(A+B)$-path packings, and $f_{\cal P}(x)$ is a polynomial.  
\end{lem}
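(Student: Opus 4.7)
My plan is to expand $\haf S' = \sum_M \prod_{ij \in M} s'_{ij}$ over the perfect matchings $M$ of $H$ and group the terms according to the perfect $(A\cup B)$-path packing ${\cal P}_M$ that $M$ induces via Gallai's correspondence. Once ${\cal P}_M = {\cal P}$ is fixed, each such $M$ decomposes into (i) the edges realizing the paths of ${\cal P}$ inside $H$, and (ii) a perfect matching of the induced subgraph $H[W({\cal P})]$ on the ``leftover'' vertex set $W({\cal P}) := V_H\setminus V({\cal P})$, where $V({\cal P})$ collects the endpoints of ${\cal P}$ in $A\cup B$ together with both copies $v, v'$ of every internal $U$-vertex of ${\cal P}$. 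So the proof reduces to evaluating the sum of signed weighted products over the matchings realizing each ${\cal P}$ separately.

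For step (i), each path $P = s v_1 \cdots v_{\ell-1} t$ admits exactly two $M$-realizations inside $H$: the ``unprimed-start'' one whose first $M$-edge out of $s$ enters $U$, and the ``primed-start'' one whose first $M$-edge enters $U'$. The forced $E_=$-flips at each internal vertex then alternate the primed/unprimed state of successive $M$-edges. Both realizations produce the same monomial $x^{w(P)}$, so only their signs can differ. Signs arise only from $M$-edges lying in $E'_t$ for some $t \in B$, and along the path such edges can appear only at the first and the last $M$-edge, since internal $M$-edges stay inside $U$-$U$ or $U'$-$U'$ and never touch $A\cup B$. A short parity case analysis then shows that the two realizations together contribute: factor $+2$ for an $A$-path; factor $2(-1)^{\ell-1}$ for a $B$-path of length $\ell$; and $0$ for a mixed $A$-$B$ path, because its two realizations carry opposite signs and cancel.

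Consequently only perfect $(A+B)$-path packings survive in the grouped sum, and the per-path factors multiply to $2^\tau (-1)^{\theta({\cal P})}$, since exactly the even-length $B$-paths flip the sign. For step (ii), $H[W({\cal P})]$ contains no vertex of $A\cup B$, so $S'$ coincides with $S$ on $W({\cal P})$. The ``trivial'' perfect matching $\{vv' : v \in U \text{ not on any path of } {\cal P}\}$ exists and consists entirely of $E_=$-edges of weight $0$, contributing the constant $1$ to $\haf S|_{W({\cal P})}$; every other perfect matching of $H[W({\cal P})]$ uses at least one edge of weight $\geq 1$ and therefore contributes a monomial of $x$-degree $\geq 1$. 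Hence $\haf S'|_{W({\cal P})} = 1 + x f_{\cal P}(x)$ for some polynomial $f_{\cal P}$, and multiplying by the factor $x^{w({\cal P})}$ from the path realization yields the claimed identity.

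The main difficulty will be the sign bookkeeping in step (i): one must track how the $E_=$-flips propagate the primed/unprimed parity along a path so as to confirm simultaneously that both realizations of any $B$-path produce the same sign, while those of a mixed $A$-$B$ path produce opposite signs. This sign asymmetry is precisely what forces mixed paths to cancel and is the only non-routine piece of the argument.
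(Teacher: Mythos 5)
Your proposal follows essentially the same route as the paper's proof: expand the hafnian over perfect matchings of $H$, group by the induced packing ${\cal P}_M$, factor each group into the two matching-realizations of each path times the hafnian of the leftover subgraph, and do the sign case analysis (your per-path factors $+2$ for an $A$-path, $2(-1)^{\ell-1}$ for a $B$-path, and $0$ for a mixed path agree exactly with the paper's equation for $s'(M_{k,1})+s'(M_{k,2})$). The remaining sign bookkeeping you flag is the routine verification the paper also carries out by direct inspection of the two matchings $M_{k,1},M_{k,2}$, so the plan is sound and complete in outline.
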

\begin{proof}
For a matching $M$ of $H$, let $s'(M):=\prod_{ij\in M}s'_{ij}$. By the above discussion on Gallai's construction, we obtain
\begin{equation}\label{equ1}
\haf S'=\sum_M s'(M)
 =\displaystyle\sum_{{\cal P}}\sum_{M: {\cal P}_M={\cal P}}s'(M), 
\end{equation}
where $M$ ranges over all perfect matchings in $H$ and ${\cal P}$ ranges over all perfect $(A\cup B)$-path packings in $G$. 
First we estimate $\sum_{M:{\cal P}_M={\cal P}}s'(M)$. Suppose ${\cal P}=\{P_1,\dots,P_{\tau}\}$. 
For each path $P_{k}=(s_k,v_1,v_2,\dots,v_{n_k},t_k)\:(k=1,\dots,\tau)$, we define two matchings $M_{k,1},M_{k,2}$ in $H$ by
\begin{eqnarray*}
M_{k,1}=
\begin{cases}
\{s_kv_1,v'_1v'_2,\dots,v_{n_k-1}v_{n_k},v'_{n_k}t_k\}& \mathrm{if}\:n_k\:\mathrm{is}\:\mathrm{odd}, \\
\{s_kv_1,v'_1v'_2,\dots,v'_{n_k-1}v'_{n_k},v_{n_k}t_k\}& \mathrm{if}\:n_k\:\mathrm{is}\:\mathrm{even},  
\end{cases}\\
M_{k,2}=
\begin{cases}
\{s_kv'_1,v_1v_2,\dots,v'_{n_k-1}v'_{n_k},v_{n_k}t_k\}& \mathrm{if}\:n_k\:\mathrm{is}\:\mathrm{odd}, \\
\{s_kv'_1,v_1v_2,\dots,v_{n_k-1}v_{n_k},v'_{n_k}t_k\}& \mathrm{if}\:n_k\:\mathrm{is}\:\mathrm{even}. 
\end{cases}
\end{eqnarray*}
Both of them have weight $w(P_{k})$. 
Then a perfect matching $M$ with ${\cal P}_M={\cal P}$ can be represented as the union of $\bigcup_{k=1}^{\tau}M_{k,i_k}\:(i_k\in\{1,2\})$ and a perfect matching $M'$ of the subgraph $H - {\cal P}$ of $H$ obtained 
by removing vertices in $\bigcup_{k=1}^{\tau}M_{k,i_k}$. 
Then we obtain
\begin{eqnarray}
\nonumber\sum_{M:{\cal P}_M={\cal P}}s'(M)&=&\sum_{i_1\in\{1,2\}}\cdots\sum_{i_{\tau}\in\{1,2\}}\sum_{M'}s'(M_{1,i_1})\cdots s'(M_{\tau,i_{\tau}})s'(M')\\\label{equ2}
&=&(s'(M_{1,1})+s'(M_{1,2}))\cdots(s'(M_{\tau,1})+s'(M_{\tau,2}))\sum_{M'}s'(M'),
\end{eqnarray}
where $M'$ ranges over all perfect matchings of $H - {\cal P}$.

Next we estimate $s'(M_{k,1})+s'(M_{k,2})$. 
We call an edge in $E'_{t}$ for $t\in B$ {\em minus}. 
Then $s'(M_{k,j})=x^{w(P_k)}$ if $M_{k,j}$ has an even number of minus edges, and
$s'(M_{k,j})=-x^{w(P_k)}$ if $M_{k,j}$ has an odd number of minus edges. 
If $P_k$ connects $A$ and $B$, just one of $M_{k,1}$ and $M_{k,2}$ contains one minus edge. 
If $P_k$ is an $A$-path, then neither $M_{k,1}$ nor $M_{k,2}$ contains one minus edge. 
If $P_k$ is a $B$-path and the length of $P_k$ is odd, one of $M_{k,1}$ and $M_{k,2}$ has two minus edges and the other has no minus edge. 
If $P_k$ is a $B$-path and the length of $P_k$ is even, both of $M_{k,1}$ and $M_{k,2}$ have one minus edge. 
(Recall the assumption that there is no edge joining $A\cup B$.)
Hence we obtain
\begin{equation}\label{equ3}
s'(M_{k,1})+s'(M_{k,2})=
\begin{cases}
0 & \mbox{if $P_k$ connects $A$ and $B$},\\
-2x^{w(P_k)}&\mbox{if $P_k$ is an even-length $B$-path},\\
2x^{w(P_k)}&\mbox{otherwise}.
\end{cases}
\end{equation}
Finally we estimate $\sum_{M'} s'(M')$. The perfect matching consisting of edges in $E_=$ has weight $0$, and other perfect matchings have weight at least $1$. 
Thus $\sum_{M'}s'(M')$ is represented as $1+xf(x)$ for a polynomial $f$. By this fact and equations (\ref{equ1}), (\ref{equ2}) and (\ref{equ3}), we obtain the formula.
\end{proof} 

\paragraph{Unique Optimal Solution Case.}
We first consider the case where $G$ has a unique shortest perfect $(A+B)$-path packing ${\cal P}^*$. Here $w$ is not necessarily uniform (but is bounded by a polynomial of $n$).
In this case, Lemma \ref{lem:hafs} immediately yields a desired algorithm to find ${\cal P}^*$.
Indeed, the leading term (lowest degree term) of $\haf S'$ is $(-1)^{\theta({\cal P}^*)} 2^{\tau}x^{w({\cal P}^*)}$ (by the uniqueness).
In particular we can obtain the minimum degree $w({\cal P}^*)$ by computing $\haf S'$ modulo $2^{\tau +1}$.
Observe that an edge $e$ belongs to ${\cal P}^*$ if and only if
the degree of the leading term of $\haf S'$ strictly increases when $e$ is removed from $G$.
Thus we can determine ${\cal P}^*$ by $m+1$ computations of the hafnian of a $2n \times 2n$ matrix in modulo $2^{\tau+1}$. By Theorem \ref{theo:hafmod2k} (with $N=$ maximum of $w$),
this can be done in $O(f(n)^{|A|+|B|})$ time for a polynomial~$f$.

\paragraph{General Case.}
Suppose now that $w$ is uniform weight, i.e., $w(e)=1$ for all $e$ in $E$.
We consider the general case where there may be two or more shortest perfect $(A+B)$-path packings. 
We construct a randomized polynomial time algorithm with the help of the isolation lemma~\cite{vazirani}. 
This technique is due to \cite{twopaths}. We use the isolation lemma in the following form: 

\begin{lem}\label{lem:isolemma}
Let $n$ be a positive integer and ${\cal F}$ a family of subsets of $E=\{e_1,\dots,e_m\}$. 
Weight $w(e_i)$ is assigned to each element $e_i$ of $E$, where  $w(e_i)$ are chosen  independently and uniformly at random from $\{2mn,2mn+1,\dots,2mn+2m-1\}$. 
Then, with probability greater than $1/2$, there exists a unique set $F\in {\cal F}$ of minimum weight $w(F):=\sum_{e\in F}w(e)$. 
\end{lem}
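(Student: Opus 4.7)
The plan is to apply the standard argument of Mulmuley, Vazirani, and Vazirani, adapted only trivially since shifting every weight by the common constant $2mn$ does not alter which subsets in $\mathcal{F}$ attain the minimum weight. So I may as well treat the weights as independent uniform draws from a set of $2m$ consecutive integers.

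The key device is a union bound over a collection of ``bad events.'' For each element $e_i \in E$, call $e_i$ \emph{ambiguous} (with respect to a weighting $w$) if there exist $F, F' \in \mathcal{F}$ both of minimum weight with $e_i \in F$ and $e_i \notin F'$. I would first argue: if no element is ambiguous, then the minimum weight set is unique. Indeed, if $F_1 \neq F_2$ were two distinct minimum weight sets, pick any $e_i \in F_1 \triangle F_2$; by symmetry $e_i$ lies in one of them and not the other, so $e_i$ is ambiguous. Thus it suffices to bound the probability that some $e_i$ is ambiguous.

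For a fixed $i$, I use the principle of deferred decisions: condition on the weights $w(e_j)$ for all $j \neq i$. Define
\begin{equation*}
a_i := \min_{F \in \mathcal{F},\, e_i \in F} \Bigl( w(e_i) + \sum_{e \in F \setminus \{e_i\}} w(e) \Bigr),\qquad b_i := \min_{F \in \mathcal{F},\, e_i \notin F} \sum_{e \in F} w(e),
\end{equation*}
so that $a_i = w(e_i) + c_i$ for some constant $c_i$ determined by the conditioning, while $b_i$ is also determined and independent of $w(e_i)$. Now $e_i$ is ambiguous precisely when $a_i = b_i$, i.e.\ when $w(e_i) = b_i - c_i$, a single fixed value. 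Since $w(e_i)$ is uniform over $2m$ values, this occurs with probability at most $1/(2m)$, whatever the conditioning; hence the unconditional probability that $e_i$ is ambiguous is at most $1/(2m)$.

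Taking a union bound over the $m$ elements, the probability that \emph{some} element is ambiguous is at most $m \cdot 1/(2m) = 1/2$. Therefore, with probability strictly greater than $1/2$, no element is ambiguous, and by the earlier observation the minimum weight set in $\mathcal{F}$ is then unique. No step here is really the ``hard part,'' since the argument is classical; the only point that deserves care is the deferred-decisions reasoning, which must be phrased so that the conditional probability bound holds uniformly, allowing us to read off the unconditional bound cleanly.
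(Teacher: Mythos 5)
The paper offers no proof of this lemma---it is quoted directly from Mulmuley--Vazirani--Vazirani---and your argument is precisely the standard proof from that source (deferred decisions on $w(e_i)$, then a union bound over the $m$ elements), so in substance you are reproducing the argument the paper's citation points to. One side remark is wrong, though inessential: adding the common constant $2mn$ to every weight \emph{does} alter which sets attain the minimum when $\mathcal{F}$ contains sets of different cardinalities, since shifting by $c$ changes $w(F_1)-w(F_2)$ by $c(|F_1|-|F_2|)$; and the family of path packings in the paper's application does contain edge sets of different sizes. Fortunately you never use this remark---your deferred-decisions step only needs that $w(e_i)$ is uniform over $2m$ distinct values, which holds for $\{2mn,\dots,2mn+2m-1\}$ as given.

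The actual gap is at the very end: the union bound shows the probability that some element is ambiguous is \emph{at most} $1/2$, from which you may conclude only that the minimizer is unique with probability \emph{at least} $1/2$, not ``strictly greater than $1/2$'' as the statement and your last sentence assert. To obtain the strict inequality you need one further observation: if $F_1\neq F_2$ are both minimizers, then $|F_1\triangle F_2|\geq 2$ (they cannot differ in a single element, since that element would have strictly positive weight $\geq 2mn$ and would separate their weights), so on the event of non-uniqueness at least two elements are ambiguous. Hence the expected number of ambiguous elements, which is at most $m\cdot\frac{1}{2m}=\frac{1}{2}$, is also at least twice the probability of non-uniqueness, giving a failure probability of at most $1/4$ and thus a success probability $\geq 3/4 > 1/2$. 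Alternatively, note that the paper's proof of Theorem \ref{theo:main} invokes only the bound ``$\geq 1/2$'', so the weaker conclusion you actually established already suffices for the application.
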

We are ready to prove our main theorem. 
\begin{proof}[Proof of Theorem $\ref{theo:main}$]
We perturb the weight $w$ into $w'$ so that a shortest packing for $w'$ is unique and is also shortest for $w$.
For each edge $e$, choose $a$ from $\{2mn,\dots,2mn+2m-1\}$ independently and uniformly at random, and let $w'(e):= a$. 
By Lemma~\ref{lem:isolemma}, with a high probability ($\geq 1/2$), a shortest $(A+B)$-path packing ${\cal P}^*$ for $w'$ is unique. By the unique optimal solution case above, we can find ${\cal P}^*$ in $O(f(n)^{|A|+|B|})$ time.
We finally verify that ${\cal P}^*$ is actually shortest for the original uniform weight $w$.
Indeed, pick an arbitrary packing ${\cal P}$ not equal to ${\cal P}^*$. Then we have
\begin{eqnarray*}
1 &\leq& w'({\cal P})-w'({\cal P}^*)
\leq (2mn+2m-1)w({\cal P})-2mnw({\cal P}^*)\\
&\leq& 2mn(w({\cal P})-w({\cal P}^*))+(2m-1)w({\cal P}). 
\end{eqnarray*}
Hence we have
$$w({\cal P}) - w({\cal P}^*) \geq \frac{1}{2mn} - \frac{(2m-1)w({\cal P})}{2mn} \geq -1+\frac{1+w({\cal P})}{2mn}> -1,$$
where the second inequality follows from $w({\cal P}) \leq n$. 
Since both $w({\cal P})$ and $w({\cal P}^*)$ are integers, we have
$w({\cal P}) - w({\cal P}^*) \geq 0$. This means that ${\cal P}^*$ is shortest for $w$.
\end{proof}

\section{Related Results}
\subsection{NP-Completeness}
Here we verify that the perfect $(A+B)$-path packing problem,
the problem of deciding
the existence of a perfect $(A+B)$-path packing (with $|A|+|B|$ unfixed),
is intractable.

\begin{theo}\label{theo:npcomp}
The perfect $(A+B)$-path packing problem is NP-complete, even if $|B|=2$.
\end{theo}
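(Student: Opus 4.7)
The problem lies in NP since a perfect $(A+B)$-path packing consists of $(|A|+|B|)/2$ simple paths, a polynomial-size certificate that is verifiable in polynomial time. The substance of the theorem is NP-hardness; my plan is to give a polynomial-time reduction from \textsc{3-SAT}. Given a formula $\phi$ with variables $x_1,\dots,x_n$ and clauses $C_1,\dots,C_m$, the idea is to construct an instance $(G_\phi,A,B)$ with $B=\{b_1,b_2\}$ and $|A|$ polynomial in $n,m$ so that the $B$-path of any packing encodes a truth assignment while the $A$-paths certify that every clause is satisfied.

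For each variable $x_i$ I would build a ``diamond'' gadget with apices $u_i,v_i$: one side is a path through vertices $y_{i,j}^{+}$ for each positive occurrence of $x_i$ in a clause $C_j$, and the other side through $y_{i,j}^{-}$ for each negative occurrence. The diamonds are chained by setting $u_1=b_1$, $v_n=b_2$, and $v_i=u_{i+1}$, so that any $(b_1,b_2)$-path must traverse each diamond along exactly one side. For each clause $C_j$ I would add a pair $a_j,a_j'\in A$ attached to the three literal-occurrence vertices of $C_j$ through a rigid local subgadget (for instance, private selectors $z_{j,k},z_{j,k}'$ realizing the path $a_j - z_{j,k} - y_{i,j}^{s_k} - z_{j,k}' - a_j'$). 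Under the convention that the $B$-path traverses the side opposite to the assigned truth value of $x_i$, a literal vertex $y_{i,j}^{s}$ is free precisely when the corresponding literal is satisfied, and an $A$-path from $a_j$ to $a_j'$ can then be routed through that free vertex.

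One direction of the correctness is immediate: a satisfying assignment yields a $B$-path plus one short $A$-path per clause, hence a perfect $(A+B)$-path packing. The hard direction, and the main obstacle, is the converse: the Gallai freedom (any pairing of $A$-terminals is allowed) threatens spurious packings in which $a_j$ pairs with $a_{j'}'$ for $j\neq j'$ by routing through the unused side of some variable diamond, or in which the $B$-path itself detours through a clause gadget. To force clause-local $A$-pairings and a diamond-traversing $B$-path, I would insert auxiliary ``blocker'' $A$-pairs along each side of every variable diamond so that any traversal of a side not used by the $B$-path leaves an $A$-terminal unmatched, and keep each literal vertex a degree-three junction whose only non-diamond neighbor is its own selector. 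Verifying this rigidity---so that $G_\phi$ admits a perfect $(A+B)$-path packing if and only if $\phi$ is satisfiable---constitutes the technical heart of the argument and drives the precise design of the gadgets.
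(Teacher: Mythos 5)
There is a genuine gap. Your construction stops exactly where the difficulty begins: you name the rigidity property that the gadgets must have (no cross-clause pairing $a_j$--$a_{j'}'$, no $B$-path detour through clause gadgets, no $A$-path hijacking the unused side of a diamond), propose ``blocker'' $A$-pairs as a fix, and then declare that verifying this ``constitutes the technical heart of the argument.'' That heart is missing. The obstacle is real and not routine: in the perfect $(A+B)$-path packing problem the pairing of $A$-terminals is entirely free, so every added blocker pair is itself a new pair of free terminals that can be matched across gadgets, and one must check that the blockers do not create new spurious packings of their own. Without a concrete gadget and a case analysis showing that every perfect packing of $G_\phi$ decomposes as one diamond-traversing $B$-path plus clause-local $A$-paths, the converse direction (packing implies satisfiability) is unproven, and the reduction is only a plausible plan.

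The paper sidesteps this entirely. It reduces not from 3-SAT directly but from an already-established NP-complete problem of Hirai and Pap: given $S,T$ with $|S|=|T|=k$ and two extra vertices $a,b$, find edge-disjoint paths $P_0$ from $a$ to $b$ and $P_1,\dots,P_k$ each connecting $S$ to $T$. The point is that in the Hirai--Pap reduction from 3-SAT the solutions are automatically vertex-disjoint, and a set of paths solves their instance if and only if it is a perfect $(S\cup T+\{a,b\})$-path packing --- i.e., the rigidity you need (that the free Gallai-type pairing cannot produce anything other than $S$-to-$T$ paths plus one $a$-to-$b$ path) is already built into their gadgets and already verified in their paper. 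If you want to complete your direct reduction you must supply precisely that kind of argument yourself; otherwise the efficient route is to inherit it by citation as the paper does.
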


\begin{proof}
Hirai and Pap \cite{hirai} proved that the following edge-disjoint paths
problem is NP-complete:
($*$) Given an undirected graph $G=(V,E)$ and $S,T \subseteq V$ with
$S\cap T =  \emptyset$ and $|S| = |T| = k$ and $a,b \in V\setminus (S \cup T)$,
find an edge-disjoint set ${\cal P}$ of paths $P_0,P_1,\dots,P_k$ such that
$P_0$ connects $a$ and $b$ and $P_i$ connects $S$ and $T$ $(i=1,2,\dots,k)$.
They gave a reduction from 3-SAT to the problem ($*$).
In their reduction \cite[Section 5.2.3]{hirai}, a solution is necessarily vertex-disjoint.
Moreover, 
one can see from the reduction that a set ${\cal P}$ of paths is a solution of~($*$) if and only if
${\cal P}$ is a perfect ($S\cup T + \{a,b\}$)-path packing.
Consequently the perfect $(A+B)$-path packing problem is also
NP-complete, even if $|B| =2$.
\end{proof}

\subsection{Other Path Packing via Hafnian}
In this subsection, we generalize our technique for solving other path packing problems and discuss its limitation. 
Let $G=(V,E)$ be a simple undirected graph. 
Let $T$ be a terminal set with even cardinality $|T| = 2\tau$.
As in Section \ref{sec:algo}, we assume that there is no edge joining $T$. 

To specify path packing problems, we introduce a notion of {\em perfect matching with parity (PMP)} on $T$, which is defined as
a set of pairs $(s_it_i,\sigma_i)$ $(i=1,\dots,\tau)$ such that $\bigcup_{i}\{s_i,t_i\}=T$ and $\sigma_i\in\{\mathrm{odd}, \mathrm{even}\}$ is a parity.
A perfect $T$-path packing ${\cal P}$ (a disjoint set of $\tau$ $T$-paths)
induces PMP $M_{\cal P}$: 
\[
M_{\cal P} := \{  (st, \sigma) \mid \mbox{${\cal P}$ has an $(s,t)$-path with
its length having the parity $\sigma$} \}.
\]
For a set ${\cal M}$ of PMPs,  
a {\em perfect ${\cal M}$-path packing} is 
a perfect $T$-path packing with $M_{\cal P} \in {\cal M}$.
We introduce {\em the shortest perfect ${\cal M}$-path packing problem} as the problem of finding a perfect ${\cal M}$-path packing of minimum size. 
Notice that an $(A+B)$-path packing corresponds to 
${\cal M}_{A+B}:=\{ M \cup M' \mid  \mbox{$M$:PMP on $A$, $M'$:PMP on $B$} \}$.

Next we consider a generalization of matrix $S'$.
As in Section \ref{sec:algo}, consider graph $H$, edge sets $E_t$ and $E'_t$, and matrix $S$ (with $A\cup B=T$). 
Suppose that $T=\{1,2,3,\dots, 2\tau\}$. 
For $p=(p_1,\dots,p_{2\tau}),q=(q_1,\dots,q_{2\tau})\in \mathbb{Z}^{2\tau}$, we define the matrix $S[p,q]$ from $S$ by 
\begin{equation*}
(S[p,q])_{ij}:=
\begin{cases}p_{t}s_{ij}&\mathrm{if}\: ij\in E_{t}\:\mathrm{for}\:t\in T, \\q_{t}s_{ij}&\mathrm{if}\:ij\in E'_{t}\:\mathrm{for}\:t\in T, \\s_{ij}&\mathrm{otherwise}. \end{cases}
\end{equation*}
For distinct $s,t\in T$ and parity $\sigma$, define $[p,q]_{st,\sigma}$ by
\begin{equation*}
[p,q]_{st,\sigma}:=
\begin{cases}
p_{s}p_{t}+q_{s}q_{t}&\mbox{if $\sigma=$ odd},\\
p_{s}q_{t}+q_{s}p_{t}&\mbox{if $\sigma=$ even}. 
\end{cases}
\end{equation*}
A set ${\cal M}$ of PMPs is said to be {\em h-representable} if there exist $N,k\in \mathbb{Z}_{>0}$, $n_i\in \mathbb{Z}_{\geq 0}$, $p^i,q^i\in \mathbb{Z}^{2\tau}$ for $i=1,\dots,N$ such that 
a PMP $M$ belongs to ${\cal M}$ if and only if $$\sum_{i=1}^Nn_i\prod_{(st,\sigma)\in M}[p^i,q^i]_{st,\sigma}\not\equiv 0\:\:\mathrm{mod}\:2^k.$$
In particular, the argument in Section \ref{sec:algo} says that ${\cal M}_{A+B}$ is h-representable with $N=1$, $k=\tau+1$, $n_1=1$, $p^1=(1,1,\dots,1)$ and $q^1=(1,\dots,1,-1,\dots,-1)$.
That is, $q^1$ has $1$ for the first $|A|$ entries and $-1$
 the remaining $|B|$ entries.
A generalization of Theorem \ref{theo:main} is the following. 

\begin{theo}\label{theo:hrep}
Suppose that a set ${\cal M}$ of PMPs is h-representable with parameters $N,k,n_i$, $p^i,q^i (i=1,2,\dots,N)$.
Then the shortest perfect ${\cal M}$-path packing problem can be solved in randomized polynomial time, provided $N$ and $k$ are fixed. 
\end{theo}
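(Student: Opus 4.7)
The plan is to follow the proof of Theorem \ref{theo:main} verbatim, with the single matrix $S'$ replaced by an $N$-term linear combination of the generalized matrices $S[p^i,q^i]$ supplied by the h-representation.

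The key step is a generalization of Lemma \ref{lem:hafs}: for any $p,q\in\mathbb{Z}^{2\tau}$,
\[
\haf S[p,q]=\sum_{{\cal P}}\left(\prod_{(st,\sigma)\in M_{\cal P}}[p,q]_{st,\sigma}\right) x^{w({\cal P})}\bigl(1+x f_{\cal P}(x)\bigr),
\]
where ${\cal P}$ ranges over all perfect $T$-path packings. I would prove this by mimicking Lemma \ref{lem:hafs}: expand the hafnian over perfect matchings $M$ of $H$, group them by the induced packing ${\cal P}_M={\cal P}$, and for each path $P_k=(s_k,v_1,\dots,v_{n_k},t_k)$ evaluate $s^{[p,q]}(M_{k,1})+s^{[p,q]}(M_{k,2})$, where $s^{[p,q]}(M):=\prod_{ij\in M}(S[p,q])_{ij}$. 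A direct case analysis shows that this sum equals $[p,q]_{s_kt_k,\sigma_k}\,x^{w(P_k)}$: an odd-length $P_k$ forces both terminal edges of $M_{k,1}$ to lie in $E_\cdot$ and both of $M_{k,2}$ to lie in $E'_\cdot$, contributing $p_{s_k}p_{t_k}+q_{s_k}q_{t_k}$, whereas an even-length $P_k$ uses opposite types in each $M_{k,j}$, contributing $p_{s_k}q_{t_k}+q_{s_k}p_{t_k}$. The residual factor $\sum_{M'} s^{[p,q]}(M')$ still has constant term~$1$, coming from the all-$E_=$ matching, exactly as in Lemma \ref{lem:hafs}.

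Setting $F(x):=\sum_{i=1}^N n_i\,\haf S[p^i,q^i]$, linearity and h-representability yield
\[
F(x)\equiv\sum_{{\cal P}:\,M_{\cal P}\in{\cal M}} c_{\cal P}\, x^{w({\cal P})}\bigl(1+x f_{\cal P}(x)\bigr)\pmod{2^k},
\]
with $c_{\cal P}:=\sum_i n_i\prod_{(st,\sigma)\in M_{\cal P}}[p^i,q^i]_{st,\sigma}\not\equiv 0\pmod{2^k}$, so contributions from packings with $M_{\cal P}\notin{\cal M}$ vanish. From here I would transcribe Section \ref{sec:algo}: when the shortest perfect ${\cal M}$-path packing ${\cal P}^*$ is unique, the lowest-degree term of $F(x)\bmod 2^k$ is $c_{{\cal P}^*}x^{w({\cal P}^*)}\neq 0$, so $w({\cal P}^*)$ is read off, and ${\cal P}^*$ is recovered by $m+1$ evaluations of $F\bmod 2^k$ via the edge-deletion trick (an edge $e$ lies in ${\cal P}^*$ iff deleting it strictly increases the lowest degree). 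For the general uniform-weight case, I would apply the isolation lemma (Lemma \ref{lem:isolemma}) to the family of edge sets of perfect ${\cal M}$-path packings; the same inequality as in the proof of Theorem \ref{theo:main} shows that the perturbed unique optimum is also a uniform-weight optimum. Since each $\haf S[p^i,q^i]\bmod 2^k$ lies in ${\cal S}(n,O(n^2))$ with coefficients reducible modulo $2^k$, Theorem \ref{theo:hafmod2k} computes it in $\mathrm{poly}(n)$ time for $k$ fixed; summing $N$ such values over $m+1$ edge deletions remains polynomial because both $N$ and $k$ are fixed.

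The main obstacle is the generalized Lemma \ref{lem:hafs} above: the parity case analysis is precisely what produces the coefficient $[p,q]_{st,\sigma}$ of the definition of h-representability, and it is essentially what motivates that definition. Everything that follows is a mechanical transcription of the $(A+B)$-case in Section \ref{sec:algo}, with the selection of the relevant packings carried out by the h-representability condition modulo $2^k$ rather than by the explicit $(-1)^{\theta({\cal P})}2^\tau$ factor of Lemma \ref{lem:hafs}.
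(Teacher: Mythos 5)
Your proposal is correct and follows essentially the same route as the paper: the paper's proof likewise establishes the identity $\sum_{i=1}^N n_i \haf S[p^i,q^i]=\sum_{\cal P}\bigl[\sum_i n_i\prod_{(st,\sigma)\in M_{\cal P}}[p^i,q^i]_{st,\sigma}\bigr]x^{w({\cal P})}(1+xf_{\cal P}(x))$ ``as in the proof of Lemma \ref{lem:hafs}'' and then invokes the unique-optimum argument and isolation lemma of Section \ref{sec:algo}. Your parity case analysis producing $[p,q]_{st,\sigma}$ is exactly the step the paper leaves implicit, and your accounting of which packings survive modulo $2^k$ matches the intended argument.
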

\begin{proof}
As in the proof of Lemma \ref{lem:hafs}, one can show
$$\displaystyle\sum_{i=1}^Nn_i\haf S[p^i,q^i]=\sum_{{\cal P}}\left[\sum_{i=1}^Nn_i\prod_{(st,\sigma)\in M_{\cal P}}[p^i,q^i]_{st,\sigma}\right]x^{w({\cal P})}(1+xf_{\cal P}(x)),$$
where ${\cal P}$ ranges over all perfect $T$-path packings. 
Therefore, if $G$ has a unique shortest perfect ${\cal M}$-path packing ${\cal P}^*$, 
then we can obtain ${\cal P}^*$ by computing $\sum_{i=1}^Nn_i\haf S[p^i,q^i]$ modulo $2^k$. 
This can be done in polynomial time provided $N$ and $k$ are fixed. 
As in Section \ref{sec:algo}, we obtain the randomized polynomial time algorithm for the general case. 
\end{proof}
We do not know a characterization of h-representable sets of PMPs.
We here discuss three interesting special cases, where odd and even are simply denoted by o and e respectively. 

\paragraph{Shortest two disjoint paths via hafnian modulo 4.}
First we return to the shortest two disjoint paths problem, which corresponds to $T=\{1,2,3,4\}$ and 
\[{\cal M}_2:=\{\{(12,\sigma_1),(34,\sigma_2)\}\mid \sigma_1,\sigma_2\in\{\mathrm{o}, \mathrm{e}\}\}.\]
We have seen that ${\cal M}_2$ is h-representable with $N=1=n_1=1$, $p^1=(1,1,1,1)$, $q^1=(1,1,-1,-1)$, and $k=3$. 
We present another economical h-representation. 
\begin{prop}
${\cal M}_2$ is h-representable with $N=1$, $k=2$, $n_1=1$, $p^1=(1,1,1,1)$, and $q^1=(0,1,-1,-1)$. 
\end{prop}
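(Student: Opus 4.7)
The plan is a direct finite verification. Since $|T|=4$, the set of PMPs on $T$ is small: there are three pairings of $T$, namely $\{12,34\}$, $\{13,24\}$, $\{14,23\}$, and each admits four parity assignments, giving twelve PMPs in total. Among these, ${\cal M}_2$ consists of exactly the four PMPs based on the pairing $\{12,34\}$. So my goal is to show that the product $\prod_{(st,\sigma)\in M}[p^1,q^1]_{st,\sigma}$ is nonzero modulo $4$ for those four PMPs and is zero modulo $4$ for the other eight.

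First I would tabulate the twelve quantities $[p^1,q^1]_{st,\sigma}$ from the definition, using $p^1=(1,1,1,1)$ and $q^1=(0,1,-1,-1)$. The decisive entries are $[p^1,q^1]_{12,\sigma}=1$ for both parities (the zero coordinate $q^1_1$ kills the second summand), $[p^1,q^1]_{34,\mathrm{o}}=2$ and $[p^1,q^1]_{34,\mathrm{e}}=-2$, and most importantly $[p^1,q^1]_{23,\sigma}=[p^1,q^1]_{24,\sigma}=0$ identically, because the sign pattern of $q^1$ forces the cancellations $p_2 p_t+q_2 q_t=1-1=0$ and $p_2 q_t+q_2 p_t=-1+1=0$ for $t\in\{3,4\}$. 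The remaining entries $[p^1,q^1]_{13,\sigma}$ and $[p^1,q^1]_{14,\sigma}$ evaluate to $\pm 1$, but they will be irrelevant because in every PMP on pairing $\{13,24\}$ or $\{14,23\}$ they multiply a vanishing partner.

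Combining: for every PMP on the pairing $\{12,34\}$, the product equals $1\cdot(\pm 2)=\pm 2$, which is nonzero modulo $4$; for every PMP on the pairings $\{13,24\}$ or $\{14,23\}$, the product carries a factor $[p^1,q^1]_{23,\sigma}$ or $[p^1,q^1]_{24,\sigma}$ and is therefore $0$ modulo $4$. This matches ${\cal M}_2$ on the nose, establishing h-representability with the claimed parameters.

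There is no serious obstacle here; the only point worth emphasizing is the qualitative reason why $k=2$ rather than $k=3$ suffices. With the earlier representation $q^1=(1,1,-1,-1)$ one has $[p^1,q^1]_{12,\sigma}=2$ and the product for the pairing $\{12,34\}$ is $\pm 4$, forcing $k\geq 3$. Replacing $q^1_1=1$ by $q^1_1=0$ halves $[p^1,q^1]_{12,\sigma}$ from $2$ to $1$ and shrinks the product to $\pm 2$, while the vanishing of $[p^1,q^1]_{23,\sigma}$ and $[p^1,q^1]_{24,\sigma}$, which is precisely what separates ${\cal M}_2$ from the other pairings, is untouched.
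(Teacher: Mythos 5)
Your proposal is correct and follows the same route as the paper: a direct finite verification that the product $\prod_{(st,\sigma)\in M}[p^1,q^1]_{st,\sigma}$ equals $\pm 2$ (nonzero mod $4$) exactly for the four PMPs on the pairing $\{12,34\}$ and vanishes for the other eight, because $[p^1,q^1]_{23,\sigma}=[p^1,q^1]_{24,\sigma}=0$. Your computed values agree with the paper's case table, and your closing remark on why $q^1_1=0$ lets $k=2$ suffice is a nice but inessential addition.
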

\begin{proof}
A direct calculation (e.g.,$[p^1,q^1]_{12,\mathrm{e}}[p^1,q^1]_{34,\mathrm{o}}=(1\cdot 1+0\cdot 1)\{1\cdot 1+(-1)\cdot (-1)\}=2$) shows
\begin{equation*}
\prod_{(st,\sigma)\in M}[p^1,q^1]_{st,\sigma}=
\begin{cases}
2&\mbox{if $M=\{(12,\mathrm{o}),(34,\mathrm{o})\},\{(12,\mathrm{e}),(34,\mathrm{o})\}$},\\
-2&\mbox{if $M=\{(12,\mathrm{o}),(34,\mathrm{e})\},\{(12,\mathrm{e}),(34,\mathrm{e})\}$},\\
0&\mbox{otherwise}.
\end{cases}
\end{equation*}
\end{proof}
In particular, modulo $4$ computation is sufficient. 
It might be interesting to compare with the original approach by Bj\"{o}rklund--Husfeldt~\cite{twopaths}:
their algorithm requires to compute permanents of three $n\times n$ matrices modulo $4$, whereas our algorithm with these parameters requires to compute the hafnian of one $2n\times 2n$ matrix modulo $4$.

\paragraph{Shortest odd two disjoint paths via four hafnians modulo 4.}
The hafnian approach can solve the shortest two disjoint paths problem with a parity constraint that the sum of the lengths of paths is odd. 
This problem corresponds to $T=\{1,2,3,4\}$ and ${\cal M}_{2,\mathrm{odd}}:=\{\{(12,\mathrm{o}),(34,\mathrm{e})\},\{(12,\mathrm{e}),(34,\mathrm{o})\}\}$. 

\begin{theo}
${\cal M}_{2,\mathrm{odd}}$ is h-representable with $N=4$, $k=2$,  $(n_1,n_2,n_3,n_4)=(1,1,-1,-1)$, and
\begin{eqnarray*}
p^1=(1,1,1,0),\quad q^1=(0,0,0,1),\\
p^2=(1,1,0,1),\quad q^2=(0,0,1,0),\\
p^3=(1,0,1,1),\quad q^3=(0,1,0,0),\\
p^4=(0,1,1,1),\quad q^4=(1,0,0,0).
\end{eqnarray*}
\end{theo}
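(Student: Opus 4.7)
The plan is to prove h-representability by a direct verification that exploits the combinatorial structure of the prescribed vectors $p^i, q^i$. The crucial observation is that each pair $(p^i, q^i)$ singles out one terminal $t^*_i$ (namely $t^*_1 = 4$, $t^*_2 = 3$, $t^*_3 = 2$, $t^*_4 = 1$): at this terminal $p^i_{t^*_i} = 0$ and $q^i_{t^*_i} = 1$, while at every other $t$ we have $p^i_t = 1$ and $q^i_t = 0$.

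From this structure I would immediately compute, from the definition of $[p,q]_{st,\sigma}$, that $[p^i, q^i]_{st,\sigma} \in \{0,1\}$ for every $(st, \sigma)$, and that the value is $1$ if and only if either (i) $t^*_i \in \{s,t\}$ and $\sigma = \mathrm{e}$, or (ii) $t^*_i \notin \{s,t\}$ and $\sigma = \mathrm{o}$. Consequently, for any PMP $M$ on $T = \{1,2,3,4\}$, the product $\prod_{(st,\sigma) \in M}[p^i, q^i]_{st,\sigma}$ equals $1$ exactly when the pair of $M$ containing $t^*_i$ has parity $\mathrm{e}$ and the other pair has parity $\mathrm{o}$, and equals $0$ otherwise.

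The remaining step is to tabulate $\sum_{i=1}^4 n_i \prod_{(st,\sigma)\in M}[p^i,q^i]_{st,\sigma}$ over the twelve PMPs on $T$ (three perfect matchings, each with four parity assignments). For $M = \{(12,\mathrm{o}),(34,\mathrm{e})\}$ only $i = 1, 2$ contribute (each by $+1$), giving sum $n_1 + n_2 = 2$; for $M = \{(12,\mathrm{e}),(34,\mathrm{o})\}$ only $i = 3, 4$ contribute, giving $n_3 + n_4 = -2$. Both are nonzero modulo $4$, as required. For each of the four ``cross'' PMPs with one even pair and one odd pair on the matchings $\{13,24\}$ or $\{14,23\}$, exactly one index in $\{1,2\}$ and exactly one in $\{3,4\}$ contribute $+1$; since $n_1 = n_2 = 1$ and $n_3 = n_4 = -1$, these cancel to $0$. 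For the remaining six PMPs (both parities odd, or both even, on any of the three matchings), no $i$ contributes, so the sum is $0$ on the nose.

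The main ``obstacle'' is purely bookkeeping: there are twelve PMPs and four representations, so $48$ individual products to inspect. However, the ``select-one-terminal'' structure of each $(p^i,q^i)$ reduces each product to a single $\{0,1\}$ value governed by a transparent rule, and the $\mathbb{Z}_2$-symmetry in the pair $(i=1,2)\leftrightarrow(i=3,4)$ (with opposite signs $n_i$) collapses the verification for the cross matchings to one cancellation argument. I would present the nontrivial cases in a small table and dismiss the rest by symmetry.
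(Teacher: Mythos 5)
Your proof is correct and takes essentially the same route as the paper, which simply tabulates $C_i=\prod_{(st,\sigma)\in M}[p^i,q^i]_{st,\sigma}$ for all twelve PMPs (Table \ref{tab:partwo}) and checks that $C_1+C_2-C_3-C_4$ is $\pm2$ exactly on ${\cal M}_{2,\mathrm{odd}}$ and $0$ otherwise. Your ``select-one-terminal'' observation and the $(1,2)\leftrightarrow(3,4)$ cancellation are a cleaner way of organizing that same verification, and your computed values agree with the paper's table.
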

\begin{proof}
\begin{table}[t]
\caption{Values of $C_i$. }\label{tab:partwo}
\begin{tabular}{|c|ccccc|}\hline
PMP&$C_1$&$C_2$&$C_3$&$C_4$&$C_1+C_2-C_3-C_4$\\\hline
$\{(12,\mathrm{o}),(34,\mathrm{o})\}$&0&0&0&0&0\\
$\{(12,\mathrm{o}),(34,\mathrm{e})\}$&1&1&0&0&2\\
$\{(12,\mathrm{e}),(34,\mathrm{o})\}$&0&0&1&1&-2\\
$\{(12,\mathrm{e}),(34,\mathrm{e})\}$&0&0&0&0&0\\
$\{(13,\mathrm{o}),(24,\mathrm{o})\}$&0&0&0&0&0\\
$\{(13,\mathrm{o}),(24,\mathrm{e})\}$&1&0&1&0&0\\
$\{(13,\mathrm{e}),(24,\mathrm{o})\}$&0&1&0&1&0\\
$\{(13,\mathrm{e}),(24,\mathrm{e})\}$&0&0&0&0&0\\
$\{(14,\mathrm{o}),(23,\mathrm{o})\}$&0&0&0&0&0\\
$\{(14,\mathrm{o}),(23,\mathrm{e})\}$&0&1&1&0&0\\
$\{(14,\mathrm{e}),(23,\mathrm{o})\}$&1&0&0&1&0\\
$\{(14,\mathrm{e}),(23,\mathrm{e})\}$&0&0&0&0&0\\\hline
\end{tabular}
\centering
\end{table}
One can verify the theorem from the value of $C_i:=\prod_{(st,\sigma)\in M}[p^i,q^i]_{st,\sigma}$ 
for $i=1,2,3,4$ and all PMPs $M$ on $T$, which are shown in Table \ref{tab:partwo}. 
\end{proof}

\paragraph{Non h-representability of 3-disjoint paths.}
A deep result by Robertson--Seymour~\cite{robertson} is that the $k$-disjoint paths problem is solvable in polynomial time (for fixed $k$) . 
One may naturally ask whether the shortest $k$-disjoint paths problem for $k\geq 3$ is solvable by this approach. 
Unfortunately our approach cannot reach the shortest 3-disjoint paths problem, which corresponds to $T=\{1,2,3,4,5,6\}$ and 
\[{\cal M}_3:=\{\{(12,\sigma_1),(34,\sigma_2),(56,\sigma_3)\}\mid \sigma_1,\sigma_2,\sigma_3\in \{\mathrm{o}, \mathrm{e}\}\}. \] 
\begin{theo}\label{theo:3imp}
${\cal M}_3$ is not h-representable. 
\end{theo}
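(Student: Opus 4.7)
The plan is to derive a $\mathbb{Z}$-linear relation on the values of $f := \sum_i n_i F_{p^i, q^i}$ at six specific PMPs by exploiting a rank bound; this relation will contradict the hypothesized h-representability.

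For each index $i$, form the symmetric matrix $G^i := p^i (p^i)^\top + q^i (q^i)^\top \in \mathbb{Z}^{6 \times 6}$. Its entries are exactly the odd-parity values, $G^i_{st} = [p^i, q^i]_{st, \mathrm{odd}}$, and it has rank at most $2$. For any pairing $\pi = \{\{s_1, t_1\}, \{s_2, t_2\}, \{s_3, t_3\}\}$ of $\{1, \ldots, 6\}$, let $M^{\mathrm{o}}_\pi$ denote the PMP with this pairing and all three parities odd; then $F_{p^i, q^i}(M^{\mathrm{o}}_\pi) = \prod_{j=1}^{3} G^i_{s_j t_j}$. Since $\mathrm{rank}(G^i) \le 2$, every $3 \times 3$ minor of $G^i$ vanishes. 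Applying this to the submatrix with rows $\{1, 3, 5\}$ and columns $\{2, 4, 6\}$ and using $G^i_{jk} = G^i_{kj}$, the Leibniz expansion gives
\begin{equation*}
\sum_{\pi \in \mathcal{B}} \epsilon_\pi \prod_{\{s, t\} \in \pi} G^i_{st} \;=\; 0,
\end{equation*}
where $\mathcal{B}$ is the set of the six ``bipartite'' pairings of $\{1, \ldots, 6\}$ with respect to the split $\{1, 3, 5\} \sqcup \{2, 4, 6\}$ (each pair having one endpoint in each part), and $\epsilon_\pi = \pm 1$ records the sign of the corresponding permutation. The key combinatorial point is that $\pi_0 := \{12, 34, 56\}$ itself lies in $\mathcal{B}$ (arising from the identity permutation, with $\epsilon_{\pi_0} = +1$), while the remaining five pairings in $\mathcal{B}$ are all different from $\pi_0$.

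Multiplying the identity by $n_i$ and summing over $i$ gives the exact $\mathbb{Z}$-equality $\sum_{\pi \in \mathcal{B}} \epsilon_\pi f(M^{\mathrm{o}}_\pi) = 0$. If $\mathcal{M}_3$ were h-representable, then for every $\pi \in \mathcal{B} \setminus \{\pi_0\}$ the PMP $M^{\mathrm{o}}_\pi$ would lie outside $\mathcal{M}_3$, so $f(M^{\mathrm{o}}_\pi) \equiv 0 \pmod{2^k}$; isolating the $\pi_0$ term then forces $f(M^{\mathrm{o}}_{\pi_0}) \equiv 0 \pmod{2^k}$, contradicting the h-representability requirement $f(M^{\mathrm{o}}_{\pi_0}) \not\equiv 0 \pmod{2^k}$ (since $M^{\mathrm{o}}_{\pi_0} \in \mathcal{M}_3$). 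There is no substantive obstacle once the rank-$\le 2$ structure of $G^i$ is identified; the only point worth checking is that the bipartition $\{1, 3, 5\} \sqcup \{2, 4, 6\}$ is chosen so that it separates each pair of $\pi_0$, which is precisely what guarantees $\pi_0 \in \mathcal{B}$ and hence involves the ``good'' PMP in the forced relation.
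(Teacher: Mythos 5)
Your proof is correct, and it takes a genuinely different route to the key step. Both arguments share the same skeleton: exhibit an integer linear relation among the values of $f=\sum_i n_i\prod_{(st,\sigma)}[p^i,q^i]_{st,\sigma}$ at six PMPs, exactly one of which lies in $\mathcal{M}_3$, so that the five ``bad'' values being $\equiv 0 \pmod{2^k}$ forces the ``good'' one to vanish as well. The difference is in how the relation is obtained. The paper first reduces, via Lemma \ref{hafbunkai}, to the basis $p=\chi$, $q=\mathbf{1}-\chi$ with $\chi\in\{0,1\}^6$, and then verifies by computer the $64$ instances of the relation (\ref{mujun1}) among six PMPs of mixed parities. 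You instead observe that the odd-parity values $[p,q]_{st,\mathrm{o}}=p_sp_t+q_sq_t$ are the off-diagonal entries of the rank-$\le 2$ matrix $pp^\top+qq^\top$, so the $3\times 3$ minor on rows $\{1,3,5\}$ and columns $\{2,4,6\}$ (which avoids the diagonal) vanishes identically; its Leibniz expansion is precisely a signed six-term relation over the six bipartite pairings, of which only $\{12,34,56\}$ underlies a member of $\mathcal{M}_3$, and the relation holds for arbitrary $p^i,q^i$ so Lemma \ref{hafbunkai} is not even needed. What each buys: your argument is computer-free, hand-checkable, and conceptually explains the obstruction (the rank-two structure of the odd-parity Gram matrix), using only all-odd PMPs; the paper's $\chi$-basis formulation, while requiring a machine check, puts the relation in a form amenable to exhaustive search, which is how one would hunt for obstructions to h-representability of other families $\mathcal{M}$. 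One small remark: the sign $\epsilon_{\pi_0}=+1$ you compute is not actually needed, since any $\epsilon_{\pi_0}=\pm1$ is a unit modulo $2^k$.
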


We start with a preliminary argument. Let $\bi{1}:=(1,1,\dots, 1)$. 
For $\chi\in\{0,1\}^{2\tau}$, let $S(\chi):=S[\chi,\bi{1}-\chi]$.
Then $\haf S[p,q]$ can be expressed as a linear combination of $\haf S(\chi)$ over $\chi\in\{0,1\}^{2\tau}$:

\begin{lem}\label{hafbunkai}
$\haf S[p,q]=\displaystyle\sum_{\chi\in\{0,1\}^{2\tau}}\prod_{i=1}^{2\tau}\left\{\chi_ip_i+(1-\chi_i)q_i\right\}\:\haf S(\chi)$. 
\end{lem}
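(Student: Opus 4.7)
The plan is to expand both sides as explicit sums over the perfect matchings of $H$ and match them term by term, using the fact that each terminal $t\in T$ is incident, in any perfect matching, to exactly one edge, and that edge lies in $E_t\cup E_t'$.

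First, I would expand the left-hand side directly from the definition of the hafnian:
\begin{equation*}
\haf S[p,q] \;=\; \sum_{M\in\mathcal{M}}\prod_{ij\in M}(S[p,q])_{ij},
\end{equation*}
where $\mathcal{M}$ denotes the set of perfect matchings of $H$ (viewed as partitions into pairs). For each matching $M$ and each terminal $t\in T$, the standing assumption that $H$ has no edge between two terminals guarantees that the unique edge of $M$ incident to $t$ lies in $E_t\cup E_t'$. Define the \emph{pattern} $\chi(M)\in\{0,1\}^{2\tau}$ by $\chi(M)_t=1$ if this edge lies in $E_t$ and $\chi(M)_t=0$ if it lies in $E_t'$. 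Then, by the definition of $S[p,q]$, the product over $ij\in M$ splits as
\begin{equation*}
\prod_{ij\in M}(S[p,q])_{ij}
\;=\;\Bigl(\prod_{t=1}^{2\tau}\bigl(\chi(M)_t\,p_t+(1-\chi(M)_t)\,q_t\bigr)\Bigr)\prod_{ij\in M}s_{ij},
\end{equation*}
since for each $t$ we pick up precisely the factor $p_t$ or $q_t$ depending on $\chi(M)_t$.

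Next I would analyze the right-hand side. Since $S(\chi)=S[\chi,\mathbf{1}-\chi]$ is obtained from $S$ by multiplying $E_t$-entries by $\chi_t$ and $E_t'$-entries by $1-\chi_t$, and each of these coefficients is $0$ or $1$, a perfect matching $M$ contributes a nonzero term $\prod_{ij\in M}s_{ij}$ to $\haf S(\chi)$ precisely when $\chi(M)=\chi$, and contributes $0$ otherwise. Hence
\begin{equation*}
\haf S(\chi)\;=\;\sum_{M:\,\chi(M)=\chi}\prod_{ij\in M}s_{ij}.
\end{equation*}

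Finally, I would combine these two observations. Summing over $\chi$ and using that the factor $\chi_t p_t+(1-\chi_t)q_t$ depends only on $\chi$, not on $M$,
\begin{equation*}
\sum_{\chi\in\{0,1\}^{2\tau}}\prod_{i=1}^{2\tau}\bigl(\chi_i p_i+(1-\chi_i)q_i\bigr)\,\haf S(\chi)
\;=\;\sum_{\chi}\sum_{M:\,\chi(M)=\chi}\prod_{i=1}^{2\tau}\bigl(\chi_i p_i+(1-\chi_i)q_i\bigr)\prod_{ij\in M}s_{ij}.
\end{equation*}
Since the matchings $M$ are partitioned by their patterns $\chi(M)$, this equals the sum over all $M\in\mathcal{M}$ of the expression displayed in the first paragraph, which is $\haf S[p,q]$. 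There is no real obstacle here; the only point to handle carefully is the bookkeeping at the terminals, which is exactly where the assumption that $T$ is an independent set in $H$ is used to ensure that $\chi(M)$ is well defined.
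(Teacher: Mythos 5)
Your proof is correct and follows essentially the same route as the paper: you classify perfect matchings of $H$ by the vector recording, for each terminal $t$, whether its matching edge lies in $E_t$ or $E'_t$ (the paper calls this the \emph{type} of $M$), and then observe that the matchings of type $\chi$ contribute exactly $\prod_i\{\chi_i p_i+(1-\chi_i)q_i\}\haf S(\chi)$. The only difference is that you spell out the verification that $\haf S(\chi)$ collects precisely the type-$\chi$ matchings, which the paper leaves as ``one can verify.''
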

\begin{proof}
Each perfect matching of $H$ determines $\chi\in\{0,1\}^{2\tau}$ as: $\chi_i=1$ if and only if
node $i$ is matched to a node in $U$. Here $\chi$ is called the {\em type} of $M$. 
We classify all perfect matchings in terms of their types. 
One can verify
\[\displaystyle\sum_{M:\mathrm{type}\:\chi}\prod_{ij\in M} (S[p,q])_{ij}=\left[\prod_{i=1}^{2\tau}\left\{\chi_ip_i+(1-\chi_i)q_i\right\}\right]\haf S(\chi).\]
Thus we have the desired formula. 
\end{proof}
From Lemma \ref{hafbunkai}, in the definition of h-representability, it suffices to consider the case where $p=\chi$ and $q=\bi{1}-\chi$ for $\chi\in\{0,1\}^{2\tau}$. 
In this case, $\prod_{(st,\sigma)\in M}[p,q]_{st,\sigma}$ is $0$ or $1$. 
Let $[\chi]_{st,\sigma}:=[\chi,\bi{1}-\chi]_{st,\sigma}$. 

\begin{proof}[Proof of Theorem $\ref{theo:3imp}$]
First consider the following six PMPs:
\begin{eqnarray*}
M_1:=\{(12,\mathrm{o}),(34,\mathrm{o}),(56,\mathrm{e})\},\quad
M_2:=\{(12,\mathrm{o}),(36,\mathrm{o}),(45,\mathrm{e})\},\\
M_3:=\{(14,\mathrm{o}),(23,\mathrm{o}),(56,\mathrm{e})\},\quad
M_4:=\{(14,\mathrm{o}),(36,\mathrm{o}),(25,\mathrm{e})\},\\
M_5:=\{(16,\mathrm{o}),(23,\mathrm{e}),(45,\mathrm{o})\},\quad
M_6:=\{(16,\mathrm{o}),(34,\mathrm{e}),(25,\mathrm{o})\}.
\end{eqnarray*}
Observe that $M_1$ is in ${\cal M}_3$ and other five PMPs are not in ${\cal M}_3$. For PMP $M$ and $\chi\in\{0,1\}^{6}$, define $b_{M,\chi}$ by
\[b_{M,\chi}:=\prod_{(st,\sigma)\in M}[\chi]_{st,\sigma}. \]
By computer calculation, we have verified the following 64 equations to hold;
\begin{equation}\label{mujun1}
b_{M_1,\chi}=b_{M_2,\chi}+b_{M_3,\chi}-b_{M_4,\chi}+b_{M_5,\chi}-b_{M_6,\chi}\quad (\chi\in\{0,1\}^6). 
\end{equation}

Next suppose that ${\cal M}_3$ is h-representable. 
Thanks to Lemma \ref{hafbunkai}, there exist $k\in\mathbb{Z}_{> 0}$ and $n_{\chi}\in\mathbb{Z}$ for $\chi\in\{0,1\}^6$
such that a PMP $M$ belongs to ${\cal M}$ if and only if
\begin{equation*}
\sum_{\chi\in\{0,1\}^6}n_{\chi}\prod_{(st,\sigma)\in M}[\chi]_{st,\sigma}\not\equiv 0\:\:\mathrm{mod}\:2^k.
\end{equation*}
In particular, it holds
\begin{equation*}\sum_{\chi\in\{0,1\}^6}n_{\chi}b_{M_j,\chi}\equiv 0\mod 2^k\:\:\:(j=2,3,4,5,6). \end{equation*}
By (\ref{mujun1}), we have
\begin{equation*}\sum_{\chi\in\{0,1\}^6}n_{\chi}b_{M_1,\chi}\equiv 0\mod 2^k. \end{equation*}
However this is a contradiction to $M_1\in{\cal M}_3$. 
\end{proof}

\section*{Acknowledgments}
We thank the referees for helpful comments.
The work was partially supported by JSPS KAKENHI Grant Numbers
25280004, 26330023, 26280004, 17K00029.

\appendix

\end{document}